\newcommand\ND{\newcommand}
\ND\lref[1]{Lemma~\ref{#1}}
\ND\tref[1]{Theorem~\ref{#1}}
\ND\pref[1]{Proposition~\ref{#1}}
\ND\sref[1]{Section~\ref{#1}}
\ND\rref[1]{Remark~\ref{#1}} 
\ND\corref[1]{Corollary~\ref{#1}}
\ND\eref[1]{Example~\ref{#1}}
\ND\map[3]{#1\!:\!#2\!\to\!#3}
\ND\ts{\!\times\!}
\ND\PD[2]{\frac{\partial#1}{\partial#2}}
\ND\half{\frac{1}{2}}
\ND\R{\mathbb{R}}
\ND\N{\mathbb{N}}
\ND\Q{\mathbb{Q}}
\ND\Z{\mathbb{Z}} 
\ND\X{\mathbb{X}}
\ND\e{\epsilon} 
\ND\ve{\varepsilon} 
\ND\Rd{\R ^d }
\ND\Ae{\mathsf{A}_{\e }^{\mathbf{a} } (\7,\6,\8)}
\ND\Aa{\mathsf{A}_{\e}^- }
\ND\A{\mathsf{A}_{\e} }
\ND\Ab{\mathsf{A}_{\e}^+ }
\ND\Aee{\Ab \backslash \Aa }
\ND\Ia{I_{a_\7}} 
\ND\za{\Theta ^{\mathbf{a} }{(\7,\6)}}
\ND\zb{\Theta ^{\mathbf{a} }{(\7,\6,\8)}}
\ND\zc{\Theta ^{\mathbf{a} }(\7 + \frac{1}{\8}, \6 )}
\ND\di{\mathcal{D} _{\infty}}  
\ND\J{\mathfrak{J} _{r,\e  /2 }}
\ND\Lm{L^2(\Theta,\mu)}
\ND\yaQ{\1^{\6}} \ND\ff{\mathfrak{f} } \ND\ya{\Theta^n_r} \ND\frn{f_{\7}^{\6}}
\ND\dii{\mathcal{D} _{\infty }^{loc}}
\ND\xir{\mathbf{x}^{\6}_{\7}(\theta )} 
\ND\ir{^{\6}_{\7,\theta }} \ND\Ri{\R^{(\6)}}
\ND\dom{\mathcal{D} } \ND\ed{(\mathcal{E}, \di )} \ND\ee{(\mathcal{E}, \dom )}
\ND\diffusion{(\{ \mathsf{P}_{\theta} \}_{\theta \in \Theta }, \{ \mathbb{X}_t \} ) }
\ND\Ereg{\mathcal{E}^{\text{reg}} }
\ND\Dreg{\dom ^{\text{reg}}}
\begin{document}

\begin{center}{\bf 
Non-collision and collision properties of \\
Dyson's model in infinite dimension and other stochastic dynamics 
 whose equilibrium states are determinantal random point fields
}
\end{center}

\bigskip
\begin{center}{\sf 
Hirofumi Osada}\\
Dedicated to Professor Tokuzo Shiga on his 60th birthday 
\end{center}

\medskip
\begin{center}Published in \\
Advanced Studies in Pure Mathematics 39, 2004\\
Stochastic Analysis on Large Scale Interacting Systems\\
pp.~325--343
\end{center}

\bigskip

\begin{abstract}
Dyson's model on interacting Brownian particles is a stochastic 
dynamics consisting of 
an infinite amount of particles moving in $ \R $ 
with a logarithmic pair interaction potential.  
For this model we will prove that each pair of particles never collide.  

The equilibrium state of this dynamics is a determinantal 
random point field with the sine kernel. 
We prove for stochastic dynamics given 
by Dirichlet forms with determinantal random point fields 
as equilibrium states  the particles never collide 
if the kernel of determining random point fields 
are locally Lipschitz continuous, and give examples of collision 
when H\"{o}lder continuous. 

In addition we construct infinite volume dynamics 
(a kind of infinite dimensional diffusions) whose equilibrium states 
are determinantal random point fields. 
The last result is partial in the sense that we simply construct 
a diffusion associated with the {\em maximal closable part} of 
{\em canonical} pre Dirichlet forms 
for given determinantal random point fields as equilibrium states. 
To prove the closability of canonical pre Dirichlet forms 
for given determinantal random point fields is still an open problem. 
We prove these dynamics are the strong resolvent limit of 
finite volume dynamics. 
\end{abstract}

%\maketitle

\numberwithin{equation}{section}
\newcounter{Const} \setcounter{Const}{0}

	\def\Ct{\refstepcounter{Const}c_{\theConst}}	%(after completion)%
	\def\cref#1{c_{\ref{#1}}}	%(after completion)%
%%%   ueno "before" wo "after" he torikae, sarani %%%%%%
%%% \Ct_{c	  wo saigoni sita ni kaeru!!!	%%%%%%
%%%  	\Ct \label{;  		%%%%%%

\theoremstyle{theorem}
\newtheorem{thm}{Theorem}[section]  
\theoremstyle{plain}
\newtheorem{lem}{Lemma}[section]
\newtheorem{cor}[lem]{Corollary}
\newtheorem{prop}[lem]{Proposition}
\theoremstyle{definition}
\newtheorem{dfn}[lem]{Definition}%[section]
\newtheorem{ntt}[lem]{Notation}
\newtheorem{cdn}[lem]{Condition}
\newtheorem{ex}[lem]{Example}
\newtheorem{prob}[lem]{Problem}
\theoremstyle{remark}
\newtheorem{rem}[lem]{Remark}%[section]

\section{Introduction} \label{s:1} 
Dyson's model on interacting Brownian particles  in infinite dimension 
is an infinitely dimensional diffusion process $ \{ (X_t ^{i} )_{i \in \N}\}  $ 
formally given by the following stochastic differential equation (SDE):
\begin{align}\label{:11}&
d X_t^i = dB_t^i + \sum_{j=1,\ j\not= i} ^{\infty}\frac{1}{X_t^i - X_t^j} 
\, dt \quad (i = 1,2,3,\ldots)
				,			\end{align}
where $ \{ B_t ^{i} \}  $ is an infinite amount of independent one dimensional Brownian motions. 
The corresponding unlabeled dynamics is 
\begin{align}\label{:12}&
\mathbb{X}_t = \sum _{i=1}^{\infty } \delta _{X_t^i}
.							\end{align}
Here $ \delta _{\cdot } $ denote the point mass at $ \cdot $.  
By definition $ \mathbb{X}_t $ is a $ \Theta  $-valued diffusion, where 
$ \Theta  $ is the set consisting of configurations on $ \R $; that is, 
\begin{align}&\label{:13}
\Theta  = \{ \theta = \sum _{i} \delta _{x_i}\, ;\,  x_i \in \R,\, \theta (\{ | x | \le r\}) < \infty  
\text{ for all } r \in \R\} 
.\end{align}  
We regard $ \Theta  $ as a complete, separable metric space with the vague topology.

In \cite{sp.2} Spohn constructed an unlabeled dynamics \eqref{:12} in the sense of 
a Markovian semigroup on $ \Lm $. 
Here $ \mu  $ is a probability measure on $ (\Theta  , \mathfrak{B}(\Theta  ) ) $ 
whose correlation functions are generated by the sine kernel 
\begin{align}\label{:14}&
\mathsf{K} _{\sin} (x)= \frac{\bar{\rho}}{\pi x } \sin (\pi x ).
\end{align}
(See \sref{s:2}). 
Here $ 0 < \bar{\rho} \le 1 $ is a constant related to the {\em density} of the particle. Spohn indeed proved the closability of a non-negative bilinear form 
$ (\mathcal{E}, \di ) $ on $ \Lm $ 
\begin{align} \label{:15} &
\mathcal{E}(\mathfrak{f},\mathfrak{g}) = \int_\Theta \mathbb{D}[\mathfrak{f},\mathfrak{g}](\theta )d\mu, 
\\ & \notag 
\di=\{\ff \in \dii \cap \Lm ; \mathcal{E}(\mathfrak{f},\mathfrak{f})
 <\infty \}.
							\end{align}
Here $ \mathbb{D} $ is the square field given by \eqref{:25} and 
$ \dii $ is the set of the local smooth functions on $ \Theta $ 
(see \sref{s:3} for the definition). The Markovian semi-group is given by the Dirichlet form that is the closure $ (\mathcal{E},\dom ) $ 
of this closable form on $ \Lm $.

The measure $ \mu $ is an equilibrium state of 
\eqref{:12}, whose formal Hamiltonian 
$ \mathcal{H} =  \mathcal{H} (\theta )  $ 
is given by $ ( \theta = \sum _{i} \delta _{x_i}) $ 
\begin{align}& \label{:16}
\mathcal{H}(\theta ) = \sum_{i\not=j} - 2 \log |x_i - x_j|  
,\end{align}
which is a reason we regard Spohn's Markovian semi-group is a 
correspondent to the dynamics formally given by 
the SDE \eqref{:11} and \eqref{:12}.  

We remark the existence of an $ L^2 $-Markovian semigroup does not imply the existence of 
the associated {\em diffusion} in general. 
Here a diffusion means (a family of distributions of) a strong Markov process with 
continuous sample paths starting from each $ \theta \in \Theta $. 

In \cite{o.dfa} it was proved that there exists a diffusion 
$ \diffusion $ 
with state space $ \Theta $ 
associated with the Markovian semigroup above. 
This construction admits us to investigate the {\em trajectory-wise} 
properties of the dynamics. 
In the present paper we concentrate on the collision property of the diffusion. 
The problem we are interested in is the following:

\vspace{1ex}

{\sf Does a pair of particles $ (X_t^i,X_t^j) $ that collides 
each other for some time $ 0 < t < \infty $ exist ?}
  
\vspace{1ex}

We say for a diffusion on $ \Theta $ {\em the non-collision occurs}
 if the above property does {\em not} hold, and {\em the collision occurs} if otherwise. 

If the number of particles is finite, then the non-collision should occur at least intuitive level. 
This is because drifts $ \frac{1}{x_i-x_j} $ have a strong repulsive effect. 
When the number of the particles is infinite, the non-collision property is 
non-trivial because the interaction potential is 
long range and un-integrable. 
We will prove the non-collision property holds 
for Dyson's model in infinite dimension.

Since the sine kernel measure is the prototype of 
determinantal random point fields, 
it is natural to ask such a non-collision property 
is universal for stochastic dynamics given by Dirichlet forms \eqref{:15} 
 with the replacement of the measure $ \mu $ with general determinantal random point fields. We will prove, if the kernel of the determinantal random point field 
(see \eqref{:23}) is 
locally Lipschitz continuous, then the non-collision always occurs. 
In addition, we give an example of determinantal random point fields with 
H\"{o}lder continuous kernel that the collision occurs.

The second problem we are interested in this paper is the following: 

\vspace{1ex}

\textsf{Does there exist $ \Theta $-valued diffusions associated with the Dirichlet forms $ (\mathcal{E},\dom ) $ on $ \Lm $ when 
$ \mu $ is determinantal random point fields ?}

\vspace{1ex}

We give a partial answer for this in \tref{l:28}.

\vspace{1ex}

The organization of the paper is as follows: 
In \sref{s:2} we state main theorems. 
In \sref{s:3} we prepare some notion on configuration spaces. 
In \sref{s:4} we prove \tref{l:23} and \tref{l:25}. 
In \sref{s:5} we prove \pref{l:27} and \tref{l:26}. 
In \sref{s:6} we prove \tref{l:28}. 
Our method proving \tref{l:20} can be applied to Gibbs measures. 
So we prove the non-collision property for Gibbs measures in \sref{s:7}.

\section{Set up and the main result} \label{s:2}

Let $ \mathsf{E} \subset \Rd $ be a closed set which is the closure 
of a connected open set in $ \Rd $ with smooth boundary.  
Although we will mainly treat the case $ \mathsf{E}= \R  $, 
we give a general framework here by following the line of \cite{so-}.  
Let $ \Theta $ denote the set of configurations on $ \mathsf{E}  $, 
which is defined similarly as \eqref{:13} by replacing 
$ \R $ with $ \mathsf{E}  $.  

A probability measure on $ (\Theta ,\mathcal{B}(\Theta )  ) $ is called 
a random point field on $ \mathsf{E}  $. 
Let $ \mu $ be a random point field on $ \mathsf{E}  $. 
A non-negative, permutation invariant function 
$ \map{\rho _n}{\mathsf{E}^n }{\R} $ is called an $ n $-correlation function 
of $ \mu $ 
if for any measurable sets $ \{ A_1,\ldots,A_m \} $ 
and natural numbers $ \{ k_1,\ldots,k_m \}  $ such that 
$ k_1+\cdots + k_m = n $ the following holds:
\begin{align*}&
\int _{A_1^{k_1}\ts \cdots \ts A_{m}^{k_m}} 
\rho _n (x_1,\ldots,x_n) dx_1\cdots dx_n = 
\int _{\Theta } \prod _{i=1}^m 
\frac{\theta (A_i) ! }{(\theta (A_i) - k_i ) ! }
d\mu 
.\end{align*}
It is known (\cite{so-}, \cite{le-1}, \cite{le-2}) that, if a family of 
non-negative, permutation invariant functions $ \{ \rho _n \}  $ satisfies 
\begin{align}\label{:21}&
\sum_{k=1}^{\infty} 
\left\{ \frac{1}{(k+j)!}  \int _{A^{k+j}} \rho _{k+j} \ 
dx_1\cdots dx_{k+j} 
\right\} ^{-1/k} = \infty
, \end{align}
then there exists a unique probability measure (random point field) $ \mu $ 
on $ \mathsf{E}  $ whose correlation functions equal $\{ \rho _n \}  $. 

Let $ \map{K}{L^2(\mathsf{E}, dx )}{L^2(\mathsf{E}, dx )} $ be 
a non-negative definite operator which is locally trace class; namely  
\begin{align}\label{:22}&
0 \le (K f , f ) _{L^2(\mathsf{E}, dx )},\quad 
\\ \notag &
\mathrm{Tr} (1_B K 1_B ) < \infty \quad \text{ for all bounded Borel set }
B 
.\end{align}
We assume $ K $ has a continuous kernel denoted by 
$ \mathsf{K} = \mathsf{K} (x,y) $. 
Without this assumption one can develop a theory of determinantal random point fields 
(see \cite{so-}, \cite{shirai-takahashi}); we assume this 
for the sake of simplicity. 

\begin{dfn}\label{dfn:1}
A probability measure $ \mu $ on $ \Theta $ is said to be a determinantal 
(or fermion) random point field with kernel $ \mathsf{K} $ 
if its correlation functions $ \rho _n $ are given by 
\begin{align}\label{:23}&
\rho _n (x_1,\ldots,x_n) = \det (\mathsf{K} (x_i,x_j)_{1\le i,j \le n })
\end{align}
\end{dfn} 

We quote: 
\begin{lem}[Theorem 3 in \cite{so-}]   \label{l:21}
Assume $ \mathsf{K}(x,y) = \overline{\mathsf{K}(y,x) }  $ and $ 0 \le K \le 1 $. 
Then $ K $ determines a unique determinantal random point field $ \mu $.  

\end{lem}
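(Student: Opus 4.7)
The plan is to verify the hypotheses of the Lenard--Macchi moment criterion invoked just above \eqref{:21}: one must show that the candidate correlation functions $\rho_n(x_1,\ldots,x_n)=\det(\mathsf{K}(x_i,x_j)_{1\le i,j\le n})$ are non-negative and permutation invariant, and then check the divergence condition \eqref{:21} on every bounded Borel set $A\subset \mathsf{E}$. Permutation invariance is automatic from the determinantal structure, so the work splits into a \emph{non-negativity} step using $K\ge 0$ and a \emph{growth} step using $K\le 1$.

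For non-negativity, the assumption $\mathsf{K}(x,y)=\overline{\mathsf{K}(y,x)}$ together with $K\ge 0$ as an operator on $L^2(\mathsf{E},dx)$ forces the Hermitian matrix $(\mathsf{K}(x_i,x_j))_{1\le i,j\le n}$ to be positive semi-definite for almost every $(x_1,\ldots,x_n)$. Testing $K$ against finite linear combinations $\sum_i c_i\phi_{i,\ve}$ of mollifiers concentrated near the $x_i$ and sending $\ve\downarrow 0$, while using continuity of $\mathsf{K}$, promotes this to every point of $\mathsf{E}^n$, so $\det(\mathsf{K}(x_i,x_j))\ge 0$ pointwise.

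For the growth condition, I would invoke the Fredholm expansion
\begin{equation*}
\det(I+z\,1_A K 1_A)=\sum_{n=0}^\infty\frac{z^n}{n!}\int_{A^n}\det(\mathsf{K}(x_i,x_j))\,dx_1\cdots dx_n\qquad(z\ge 0),
\end{equation*}
which is legitimate because $1_A K 1_A$ is trace class by \eqref{:22}. Since all eigenvalues $\lambda_i$ of $1_A K 1_A$ lie in $[0,1]$, one has
\begin{equation*}
\det(I+z\,1_A K 1_A)=\prod_i(1+z\lambda_i)\le \exp(z\,T_A),\qquad T_A:=\mathrm{Tr}(1_A K 1_A),
\end{equation*}
so $\int_{A^n}\rho_n\le (n!/z^n)\exp(zT_A)$; optimizing at $z=n/T_A$ and applying Stirling gives $\int_{A^n}\rho_n\le C\sqrt{n}\,T_A^n$. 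Consequently
\begin{equation*}
\Bigl\{\tfrac{1}{(k+j)!}\int_{A^{k+j}}\rho_{k+j}\,dx\Bigr\}^{-1/k}\ge\bigl((k+j)!/(C\sqrt{k+j}\,T_A^{k+j})\bigr)^{1/k}\sim\frac{k}{e\,T_A}\to\infty,
\end{equation*}
so \eqref{:21} holds, and the Lenard--Macchi criterion delivers both existence and uniqueness of $\mu$.

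The main obstacle is the identification step at the very start, namely the Fredholm expansion together with pointwise non-negativity of the determinantal correlation functions: once these analytic identifications are in place, the hypothesis $K\le 1$ enters only through the spectrum of $1_A K 1_A$ and $K\ge 0$ enters only through positivity of Hermitian matrices. An alternative route that sidesteps Fredholm machinery is to construct, for each bounded $A$, a reference Bernoulli--projection process on $A$ (diagonalize $1_A K 1_A$, perform independent $\mathrm{Bernoulli}(\lambda_i)$ selections of the eigenvectors, and read off the corresponding random projection) whose correlations are exactly the $\rho_n$; these consistent finite-volume laws are then glued together by Kolmogorov extension to give the unique $\mu$ on $\Theta$.
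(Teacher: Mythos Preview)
The paper does not prove this lemma at all: it is quoted verbatim as Theorem~3 of Soshnikov's survey \cite{so-}, so there is no ``paper's own proof'' to compare against. That said, your write-up deserves two comments, one on a genuine gap and one on the alternative route you sketch.

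\medskip

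\textbf{The gap.} You take the paper's paraphrase of the Lenard--Macchi criterion around \eqref{:21} at face value, but that paraphrase is incomplete: mere non-negativity of the $\rho_n$ together with the Carleman-type condition \eqref{:21} does \emph{not} guarantee existence of a point process. What Lenard actually requires is the stronger \emph{Lenard positivity}: for every bounded $A$ and every $n\ge 0$, the alternating sum
\[
\sum_{k=0}^{\infty}\frac{(-1)^k}{k!}\int_{A^{k}}\rho_{n+k}(x_1,\ldots,x_{n+k})\,dx_{n+1}\cdots dx_{n+k}\;\ge\;0
\]
(these are exactly the Janossy densities $\sigma_r^n$ of \eqref{:4u}). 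Your main argument never checks this, and in fact your growth estimate does not use $K\le 1$ at all: the inequality $\prod_i(1+z\lambda_i)\le\exp(z\sum_i\lambda_i)$ holds for every $z\ge 0$ and $\lambda_i\ge 0$, regardless of whether $\lambda_i\le 1$. So the hypothesis $K\le 1$, which is the whole point of the lemma, plays no role in the body of your proof. It is precisely in Lenard positivity that $K\le 1$ is essential (cf.\ \eqref{:53}--\eqref{:54}, where the Janossy density on a window is $\det(\mathrm{Id}-K_A)\cdot\det(L(x_i,x_j))$ with $L=K(I-K)^{-1}$; both factors require the spectrum of $K_A$ to sit in $[0,1]$).

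\medskip

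\textbf{The alternative route.} Your final paragraph is essentially Soshnikov's actual construction: diagonalize $1_A K 1_A$, run independent $\mathrm{Bernoulli}(\lambda_i)$ trials (which is exactly where $0\le\lambda_i\le 1$ is used), and verify that the resulting process on $A$ has the correct determinantal correlations; consistency in $A$ then yields $\mu$ on $\Theta$. If you promote that sketch to the main argument and discard the Lenard detour, the proof is correct and matches the cited source.
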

We give examples of determinantal random point fields. 
The first example is the stationary measure of Dyson's model in infinite dimension. 
The first three examples are related to the semicircle law of empirical distribution of eigen values of random matrices. We refer to \cite{so-} for detail. 
\begin{ex}[sine kernel]\label{d:21} 
Let $ \mathsf{K} _{\sin} $ and $ \bar{\rho} $ be as in \eqref{:14}. 
Then 
\begin{align}\label{:2m1}&
\mathsf{K} _{\sin} (t) = \frac{1}{2\pi } \int _{|k| \le \pi \bar{\rho} } e ^{\sqrt{-1} k t} \, dk 
.							\end{align}
So the $ \mathsf{K} _{\sin}  $ is a function of positive type and 
satisfies the assumptions in \lref{l:21}. 
Let $ \hat{\mu }^{ N } $ denote the probability measure on $ \R^{ N } $ defined by 
\begin{align}\label{:2m4}&
 \hat{\mu }^N = \frac{1}{Z^{N}} 
e ^{ -  \sum _{i, j =1 }^{ N } - 2 \log |x_i-x_j| }
e^{ - \lambda ^2 _{N}\sum_{i=1}^{ N } x _i ^2 } 
  \, dx_1 \cdots dx_{ N }
	,						\end{align}
where $ \lambda _{N} = 2 (\pi \bar{\rho })^3 / 3 N ^2 $ and $ Z^{N} $ is the normalization. 
Set $ \mu ^{ N } = \hat{\mu }^{ N } \circ (\xi ^{ N })^{-1}$, where 
$ \map{\xi ^{ N }}{\R^{ N }}{\Theta } $ such that 
$ \xi ^{ N }(x_1,\ldots,x_N) = \sum _{i=1}^{ N } \delta _{x_i}$.   
Let $  \rho _{ n } ^{ N } $ denote the $ n $-correlation function of $ \mu ^{ N } $.  
Let $ \rho _n $ denote the $ n $-correlation function of $ \mu $. 
Then it is known (\cite[Proposition 1]{sp.2},  \cite{so-}) that 
for all $ n = 1,2,\ldots  $ 
\begin{align}\label{:2m5}&
\lim_{ N \to\infty}  \rho _{ n } ^{ N } (x_1,\ldots,x_n) = \rho _n (x_1,\ldots,x_n) 
\quad \text{ for all }(x_1,\ldots,x_n ) 
.							\end{align}
In this sense the measure $ \mu $ is associated 
with the Hamiltonian $ \mathcal{H}  $ 
in \eqref{:16} coming from the log potential $ -2\log |x| $.  

\end{ex}
\begin{ex}[Airy kernel]\label{d:22}
$ \mathsf{E}=\R  $ and  
$$ \mathsf{K}(x,y)= \frac{\mathcal{A}_i(x) \cdot \mathcal{A}_i'(y)
- \mathcal{A}_i(y) \cdot \mathcal{A}_i'(x)
}{x-y}  
$$
Here $ \mathcal{A}_i $ is the Airy function.   
\end{ex}
\begin{ex}[Bessel kernel]\label{d:23}
Let $ \mathsf{E} = [0, \infty)  $ and 
\begin{align*}&
\mathsf{K} (x,y) = \frac{
J_{\alpha}(\sqrt{x})\cdot \sqrt{y}\cdot J_{\alpha}'(\sqrt{y})
-
J_{\alpha}(\sqrt{y})\cdot \sqrt{x}\cdot J_{\alpha}'(\sqrt{x})
}
{2(x-y)} 
.\end{align*}
Here $ J_{\alpha } $ is the Bessel function of order $ \alpha $.
 \end{ex}

\begin{ex}\label{d:24} % 
Let $ \mathsf{E} = \R  $ and 
$ \mathsf{K}(x,y) = \mathsf{m}(x) \mathsf{k}(x-y) \mathsf{m}(y)   $, where  
$ \map{\mathsf{k}}{\R }{\R } $ is a non-negative, continuous {\em even} function that is convex in $ [0, \infty) $ such that $ \mathsf{k} (0) \le 1 $, 
and $ \map{\mathsf{m}}{\R }{\R }   $ is nonnegative continuous and $ \int _{\R } \mathsf{m}(t) dt < \infty  $ and $ \mathsf{m} (x) \le 1  $ for all $ x  $ and 
$ 0 < \mathsf{m} (x) $ for some $ x $.  
Then $ \mathsf{K}  $ satisfies the assumptions in \lref{l:21}. 
Indeed, it is well-known that $ \mathsf{k}  $ is a function of positive type 
(187 p. in \cite{don} for example), so the Fourier transformation of a finite positive measure. By assumption $ 0 \le \mathsf{K}(x,y) \le 1 $,
which  implies $ 0 \le K \le 1 $. Since $ \int \mathsf{K} (x,x) dx < \infty $, 
$ K $ is of trace class. 
\end{ex}

%%%%%
%%%%%

Let $ \mathsf{A}$ denote the subset of $ \Theta  $ defined by
\begin{align}\label{:24}&
\mathsf{A}= \{ \theta \in \Theta ;\ 
\theta (\{ x \} ) \ge 2 
\quad \text{ for some } x \in \mathsf{E} \}  
					.		\end{align}
Note that $ \mathsf{A}$ denotes the set consisting of the configurations with collisions. 
We are interested in how large the set $ \mathsf{A} $ is. 
Of course $ \mu ( \mathsf{A}) = 0 $ because the 2-correlation function is locally 
integrable. We study $ \mathsf{A}$ more closely from the point of stochastic dynamics; 
namely, we measure $ \mathsf{A}$ by using a capacity.

To introduce the capacity we next consider  a bilinear form related to 
the given probability measure $ \mu $. 
Let $\dii $ be the set of all local, smooth functions on $\Theta $ 
defined in \sref{s:3}. 
For $\ff ,\mathfrak{g} \in\dii $ we set 
$\map{\mathbb{D}[\mathfrak{f},\mathfrak{g}]}{\Theta}{\R}$ by 
\begin{align} \label{:25}
\mathbb{D}[\mathfrak{f},\mathfrak{g}](\theta )&= \frac{1}{2}\sum_{i} 
\frac{\partial f (\mathbf{x} )}{\partial x_{i}}\frac{\partial g (\mathbf{x} )}{\partial x_{i}} 
.\end{align}
Here $ \theta = \sum _{i} \delta _{x_i}$, $ \mathbf{x} = (x_1,\ldots ) $   and 
$ f (\mathbf{x} )= f(x_1,\ldots )$ is the permutation invariant function such that 
$\ff (\theta )=f (x_1,x_2,\ldots )$ for all $\theta \in \Theta $. 
We set $ g $ similarly. Note that the left hand side of \eqref{:25} is again 
permutation invariant. Hence it can be regard as a function of $  \theta = \sum _{i} \delta _{x_i}$.  
Such  $ f $ and $ g $ are unique; so the function $\map{\mathbb{D}[\mathfrak{f},\mathfrak{g}]}{\Theta }{\R }$  is well defined. 

For a probability measure $ \mu $ in $ \Theta $ we set as before 
\begin{align} \notag &
\mathcal{E}(\mathfrak{f},\mathfrak{g}) = \int_\Theta \mathbb{D}[\mathfrak{f},\mathfrak{g}](\theta )d\mu, 
\\ & \notag 
\di=\{\ff \in \dii \cap \Lm ; \mathcal{E}(\mathfrak{f},\mathfrak{f})
 <\infty \}.
							\end{align}
When $ (\mathcal{E}, \di ) $ is closable on $  \Lm $, we denote its closure by 
$ (\mathcal{E}, \dom ) $. 

We are now ready to introduce a notion of capacity for a {\em pre}-Dirichlet space 
$ (\mathcal{E}, \di , \Lm ) $.  
Let $ \mathcal{O}  $ denote the set consisting of all open sets in $ \Theta $. 
For $ O \in \mathcal{O}  $ we set 
$ \mathcal{L}_{O} =\{ \mathfrak{f}  \in \di \, ;\, \mathfrak{f}  \ge 1 \ 
\mu \text{-a.e.\ on } {O} \}  $ and   
\begin{align*}&
\0 ({O}) = 
\begin{cases}
\inf_{\mathfrak{f} \in \mathcal{L}_{O} } \left\{ \mathcal{E}(\mathfrak{f}, \mathfrak{f}  ) 
+ (\mathfrak{f}, \mathfrak{f} )_{\Lm } \right\} &\mathcal{L}_{O}\not= \emptyset
\\
\infty 
&\mathcal{L}_{O}=\emptyset 
\end{cases}
.
\end{align*}
For an arbitrary subset $ {A} \subset \Theta $ we set 
$ \0 ({A}) = \inf _{{A} \subset {O} \in \mathcal{O} } \0 ({O})$. 
This quantity $ \0 $ is called 1-capacity for the pre-Dirichlet space 
$ (\mathcal{E}, \di , \Lm ) $.  

We state the main theorem:
\begin{thm}\label{l:20}
Let $ \mu $ be a determinantal random point field with kernel $ \mathsf{K}  $. 
Assume $ \mathsf{K}  $ is locally Lipschitz continuous. 
Then 
\begin{align}\label{:26}&
\0 ( \mathsf{A})=0
,							\end{align}
where $ \mathsf{A}  $ is given by \eqref{:24}. \end{thm}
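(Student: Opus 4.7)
The plan is to exhibit, for each bounded region, a sequence of test functions in $\di$ whose super-level sets $\{\mathfrak{f}\ge 1\}$ form open neighborhoods of $\mathsf{A}$ (restricted to that region) and whose $1$-capacity norm shrinks to zero. Writing $\mathsf{A} = \bigcup_{\7\in\N}\mathsf{A}^{\7}$ with $\mathsf{A}^{\7} := \{\theta\in\mathsf{A}: \theta(\{x\})\ge 2 \text{ for some } x\in B_{\7}\}$ and invoking countable subadditivity of $\0$, it suffices to prove $\0(\mathsf{A}^{\7})=0$ for each $\7$. The key analytic input is a vanishing estimate for the $2$-correlation: from $\rho_2(x,y) = \mathsf{K}(x,x)\mathsf{K}(y,y) - |\mathsf{K}(x,y)|^2$ and $\rho_2(x,x)=0$, a one-line Lipschitz expansion around the diagonal gives
$$\rho_2(x,y) \le C_{\7}\,|x-y| \qquad \text{for all } x,y \in B_{\7+1},$$
and this ``first-order'' vanishing is exactly the right strength to defeat the logarithmic singularity appearing in the capacity profile below.

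For the test functions I would take a logarithmic cutoff. Fix smooth cutoffs $\chi_{\7}$ with $\mathbf{1}_{B_{\7}}\le\chi_{\7}\le\mathbf{1}_{B_{\7+1}}$, $\psi_{\8}$ with $\mathbf{1}_{[0,\8]}\le\psi_{\8}\le\mathbf{1}_{[0,\8+1]}$, and $g_{\e}$ with $g_{\e}\equiv 1$ on $[0,\e^2]$, $g_{\e}\equiv 0$ on $[\e,\infty)$, and $|g_{\e}'(t)|\le C/(t\,|\log\e|)$ on $[\e^2,\e]$. Define
$$\mathfrak{h}_{\e,\7,\8}(\theta) := \psi_{\8}(\theta(B_{\7+2}))\Bigl[1 - \prod_{i<j}\bigl(1 - g_{\e}(|x_i-x_j|)\chi_{\7}(x_i)\chi_{\7}(x_j)\bigr)\Bigr].$$
On $\{\theta(B_{\7+2})\le\8+1\}$ the product involves only finitely many factors, so $\mathfrak{h}_{\e,\7,\8}\in\dii\cap\Lm$; moreover $\mathfrak{h}_{\e,\7,\8}\equiv 1$ on an open neighborhood of $\mathsf{A}^{\7}\cap\{\theta(B_{\7+2})\le\8\}$.

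Differentiating the bracketed product, the square field is dominated via Cauchy-Schwarz (and the count cutoff) by $C(\8)\sum_{i\ne j}|g_{\e}'(|x_i-x_j|)|^2\chi_{\7}(x_i)\chi_{\7}(x_j)$; taking expectation against the $2$-correlation and inserting the Lipschitz bound on $\rho_2$ yields
$$\mathcal{E}(\mathfrak{h}_{\e,\7,\8},\mathfrak{h}_{\e,\7,\8}) \le \frac{C(\7,\8)}{|\log\e|^2}\int_{\substack{\e^2\le|x-y|\le\e\\x,y\in B_{\7+1}}} \frac{dx\,dy}{|x-y|} = O\Bigl(\frac{1}{|\log\e|}\Bigr),$$
the integral being $\asymp|\log\e|$ in dimension $d=1$ and $o(1)$ in $d\ge 2$. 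Since $\|\mathfrak{h}_{\e,\7,\8}\|_{\Lm}^2 = O(\e^{d+1})$ by the same $\rho_2$ bound, the $1$-capacity of the open neighborhood above tends to zero, so $\0(\mathsf{A}^{\7}\cap\{\theta(B_{\7+2})\le\8\})=0$. Letting $\8\to\infty$ (the set $\{\theta(B_{\7+2})=\infty\}$ being $\mu$-null, hence capacity-null by a standard approximation using $\int\theta(B_{\7+2})\,d\mu <\infty$) and summing over $\7$ closes the argument.

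The main obstacle is the bookkeeping for $\mathbb{D}[\mathfrak{h}_{\e,\7,\8},\mathfrak{h}_{\e,\7,\8}]$: differentiating the product with respect to $x_k$ generates up to $\8$ cross-terms each proportional to some $g_{\e}'(|x_k-x_j|)$, so Cauchy-Schwarz inflates the bound by a factor of $\8$. This is why the count cutoff $\psi_{\8}$ is essential and why one must send $\e\to 0$ before $\8\to\infty$, with the gain $1/|\log\e|$ absorbing the $\8$-dependence. A secondary subtlety is that $\{\mathfrak{h}_{\e,\7,\8}\ge 1\}$ must be genuinely open in the vague topology: the count condition is softened to $\{\theta(B_{\7+2})<\8+1\}$, which is open by upper semicontinuity of $\theta\mapsto\theta(B_{\7+2})$ on the compact $B_{\7+2}$, and no mass is lost because $\mu(\theta(\partial B_{\7+2})\ne 0)=0$.
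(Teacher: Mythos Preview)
Your proposal is correct and shares the paper's core strategy: build test functions from a logarithmic radial cutoff on pairwise distances, so that the Dirichlet energy is controlled by the linear vanishing of the two--point function on the diagonal (an immediate consequence of the Lipschitz hypothesis), giving $\mathcal{E}=O(1/|\log\e|)$. The implementations differ in ways worth recording. The paper decomposes $\mathsf{A}$ by the \emph{exact} particle count $n$ in a box together with an ``empty annulus'' condition (this is the role of the sequences $\mathbf{a}\in\mathbf{A}$ and the sets $\Theta^{\mathbf{a}}(r,n,m)$, which guarantee openness in the vague topology), builds its test function as a \emph{sum} $\sum_{i\ne j}h_\e(x_i-x_j)$ restricted to that slice and then mollified into $\di$, and carries out the energy estimate against the $n$--point \emph{density} $\sigma^n_r$ via $\sigma^n_r\le\rho_n\le c\min_{i\ne j}|x_i-x_j|$ (Lemma~4.2). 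Your route is more direct: a soft count cutoff $\psi_m$ replaces the exact--count slicing, the product $1-\prod(1-g_\e)$ replaces the sum, and the energy is bounded against the two--point \emph{correlation} through the identity $E_\mu\bigl[\sum_{i\ne j}F(x_i,x_j)\bigr]=\int F\,\rho_2$. Your approach avoids density functions and mollification; the paper's avoids the Cauchy--Schwarz loss of a factor $m$ and the subsequent $m\to\infty$ step (which, incidentally, is immediate by countable subadditivity since $\{\theta(B_{r+2})=\infty\}=\emptyset$ in $\Theta$, so your closing appeal to $\mu$--nullity is unnecessary). One point to tighten: for the open--neighborhood requirement you need \emph{upper} semicontinuity of the count, so the cutoff should read $\psi_m(\theta(\bar{B}_{r+2}))$ with the closed ball; as written, $\{\theta(B_{r+2})\le m\}$ is closed, not open.
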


In \cite{o.dfa} it was proved
\begin{lem}[Corollary 1 in \cite{o.dfa}]\label{l:22}
Let $ \mu $ be a probability measure on $ \Theta $. 
Assume $ \mu $  has locally bounded correlation functions. 
Assume $ (\mathcal{E}, \di ) $ is closable on $ \Lm $. 
Then there exists a diffusion $ \diffusion $ 
associated with the Dirichlet space $ (\mathcal{E}, \dom , \Lm ) $.  
\end{lem}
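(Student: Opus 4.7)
The plan is to apply the Ma--R\"ockner theory of quasi-regular Dirichlet forms on the Polish space $\Theta$ (with the vague topology). Under the hypothesis that $(\mathcal{E},\di)$ is closable, its closure $(\mathcal{E},\dom)$ is a closed symmetric bilinear form; I would first verify it is a strongly local Dirichlet form. Markovianness on the core $\dii$ holds because the square field $\mathbb{D}$ in \eqref{:25} is a quadratic form in the partial derivatives and each $\ff\in\dii$ depends smoothly on finitely many particle coordinates, so coordinatewise smooth approximations of normal contractions can be applied, and this passes to the closure. Strong locality is immediate from the pointwise form of $\mathbb{D}$.

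The principal task is quasi-regularity. Natural compact subsets of $\Theta$ are
\[
K(\{N_r\}) = \{\theta\in\Theta \,:\, \theta(\{|x|\le r\})\le N_r \text{ for all } r\in\N\}.
\]
I would construct an $\mathcal{E}$-nest of such sets by producing, for each $r$ and $N$, cutoff functions $\chi_{r,N}\in\di$ that equal $1$ on $\{\theta(\{|x|\le r\})\le N\}$ and vanish on $\{\theta(\{|x|\le r\})\ge N+1\}$, built from symmetric smooth cutoffs in the finitely many particle positions inside a slight enlargement of $\{|x|\le r\}$. The locally bounded correlation functions bound both $\|\chi_{r,N}\|_{\Lm}^2$ and $\mathcal{E}(\chi_{r,N},\chi_{r,N})$ in terms of integrals of the $1$- and $2$-correlation functions over the bounded region; choosing $N_r$ growing sufficiently fast then forces $\0(\Theta\setminus K(\{N_r\}))\to 0$.

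For the remaining quasi-regularity conditions I would use a countable family of functions of the form $\ff_k(\theta)=\phi_k(\int g_k\,d\theta)$ with $\phi_k\in C_c^\infty(\R)$ and $g_k\in C_c^\infty(\mathsf{E})$ drawn from countable dense subsets; these functions lie in $\dii$, separate points of $\Theta$, and admit $\mathcal{E}$-quasi-continuous modifications. The Ma--R\"ockner theorem then produces a $\mu$-symmetric right process $\diffusion$ properly associated with $(\mathcal{E},\dom)$ and defined for quasi-every starting configuration. Strong locality upgrades this to continuous sample paths off an $\mathcal{E}$-exceptional set, which is the diffusion property asserted in the lemma.

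The main obstacle is the capacity estimate underlying the $\mathcal{E}$-nest. The cutoff $\chi_{r,N}$ is a symmetric function of particle positions in an annulus around $\{|x|=r\}$, so its gradient accumulates contributions from every particle there, and controlling its $\Lm$-norm requires the $2$-correlation density locally; this is precisely where the assumption that $\mu$ has \emph{locally bounded} correlation functions is essential. Once the nest is in place, everything else follows from the general quasi-regular Dirichlet form machinery together with the strong locality inherited from the pointwise form \eqref{:25}.
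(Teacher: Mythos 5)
The paper offers no proof of this lemma: it is quoted verbatim as Corollary~1 of \cite{o.dfa}, so there is no internal argument to compare against. Your reconstruction---closability $\Rightarrow$ closed strongly local Dirichlet form, quasi-regularity via an $\mathcal{E}$-nest of compact occupation-bounded configuration sets with smooth cutoffs whose capacity is controlled through the correlation-function hypothesis, a countable point-separating family of functions of the form $\phi(\int g\,d\theta)$, and then the Ma--R\"ockner theorem together with strong locality to obtain a diffusion---is the strategy used in the cited work, so your route is the right one in outline.

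One place worth tightening is the nest/capacity step. What you actually need is that for each fixed $r$ the capacity of $\{\theta(B_r)>N\}$ tends to $0$ as $N\to\infty$; only then do you intersect over $r$ with thresholds $N^{(k)}_r$ chosen so that $\sum_r\0(\{\theta(B_r)>N^{(k)}_r\})<1/k$, which makes each $F_k=\bigcap_r\{\theta(B_r)\le N^{(k)}_r\}$ compact and $\0(\Theta\setminus F_k)\to0$. For the $L^2$-norm of the cutoff $\chi_{r,N}=\phi(\theta(g))$ (with $g$ a smooth bump $\equiv 1$ on $B_r$, supported in $B_{r+1}$), Chebyshev and local integrability of $\rho_1$ already give $\mu(\theta(g)>N-1)\to0$; and for the energy, since $\mathbb{D}[\chi_{r,N},\chi_{r,N}]\le C\,\theta(B_{r+1})\,1_{\{\theta(g)>N-1\}}$ and $\theta(B_{r+1})\in L^1(\mu)$, dominated convergence gives $\mathcal{E}(\chi_{r,N},\chi_{r,N})\to0$. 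So the statement that the second correlation function is what makes the $\Lm$-norm small is slightly off; the locally bounded $\rho_n$ (in particular $\rho_1,\rho_2$) are used more broadly in the cited construction, e.g.\ to guarantee continuity/boundedness of the finite-volume density functions that underlie the cutoff and approximation machinery.
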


Combining this with \tref{l:20} we have 
\begin{thm}	\label{l:23}
Assume $ \mu $ satisfies the assumption in \tref{l:20}. 
Assume $ (\mathcal{E}, \di ) $ is closable on $ \Lm  $. 
Then a diffusion $ \diffusion $ associated with the Dirichlet space 
$ (\mathcal{E}, \dom , \Lm )  $ exists and satisfies 
\begin{align}\label{:27}&
\mathsf{P}_{\theta}( \sigma _{\mathsf{A}} = \infty ) = 1 \quad \text{ for q.e.\ }\theta 
,							\end{align}
where $ \sigma _{\mathsf{A}} = \inf \{ t > 0\, ;\, \X _t \in \mathsf{A}\} $.  
\end{thm}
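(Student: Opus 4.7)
The plan is to combine three pieces: (i) \lref{l:22}, which furnishes the diffusion $\diffusion$ associated with $\ee$; (ii) \tref{l:20}, which asserts $\0(\mathsf{A})=0$; and (iii) the standard Dirichlet-form principle that a Borel set of zero $1$-capacity is almost never hit by the associated process from quasi-every starting point. Once these are lined up, \eqref{:27} will follow by direct quotation.

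The first task is to verify the hypotheses of \lref{l:22}. Closability of $\ed$ on $\Lm$ is a standing assumption of \tref{l:23}, so only the local boundedness of the correlation functions $\{\rho_n\}$ of $\mu$ remains to be checked. Since $\mu$ is determinantal with continuous kernel $\mathsf{K}$ and $0\le K\le 1$, the matrix $(\mathsf{K}(x_i,x_j))_{1\le i,j\le n}$ is positive semi-definite with $\mathsf{K}(x_i,x_i)\le 1$; Hadamard's inequality for positive semi-definite matrices then gives $\rho_n(x_1,\ldots,x_n)=\det(\mathsf{K}(x_i,x_j))\le \prod_{i}\mathsf{K}(x_i,x_i)\le 1$. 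Thus every $\rho_n$ is globally bounded, and \lref{l:22} supplies a diffusion $\diffusion$ on $\Theta$ associated with the Dirichlet space $\ee$ on $\Lm$.

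With the process in hand, I would combine \tref{l:20} with the standard fact that a Borel set of zero $1$-capacity is $\mathcal{E}$-polar for the Hunt process of a (quasi-)regular Dirichlet form (Theorem~4.2.1 of Fukushima--Oshima--Takeda, transposed to the quasi-regular setting established in \cite{o.dfa}). The set $\mathsf{A}=\{\theta;\,\theta(\{x\})\ge 2 \text{ for some } x\in\mathsf{E}\}$ is Borel in $\Theta$, as one sees by writing it in terms of countable operations on sets of the form $\{\theta;\theta(U)\ge 2\}$ with $U$ ranging over a countable base of relatively compact open sets in the vague topology. Applying the polarity principle then yields $\mathsf{P}_\theta(\sigma_\mathsf{A}<\infty)=0$ for quasi-every $\theta$, which is exactly \eqref{:27}.

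The genuine content is hidden in \tref{l:20}, which this proof uses as a black box; beyond that, the only point requiring any real care is matching the polarity principle (originally formulated for locally compact regular Dirichlet spaces) with the infinite-dimensional quasi-regular Dirichlet space on the configuration space $\Theta$ constructed in \cite{o.dfa}. Once that identification is in place the conclusion is immediate, and no further obstacle arises.
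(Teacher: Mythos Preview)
Your proof is correct and follows essentially the same route as the paper: existence of the diffusion via \lref{l:22}, zero capacity of $\mathsf{A}$ via \tref{l:20}, and then the standard polarity principle from Dirichlet form theory. The only point the paper makes explicit that you leave implicit is that the capacity in \tref{l:20} is computed for the \emph{pre}-Dirichlet form $\ed$, which by definition dominates the capacity for the closure $\ee$; this is what justifies applying the polarity principle for the closed (quasi-regular) form to conclude \eqref{:27}.
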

We refer to \cite{fot} for q.e.\ (quasi everywhere) and related notions on Dirichlet form theory. We remark the capacity of pre-Dirichlet forms are bigger than or 
equal to the one of its closure by definition. So 
\eqref{:27} is an immediate consequence of \tref{l:20} and 
the general theory of Dirichlet forms 
once $ (\mathcal{E}, \di ) $ is closable on $ \Lm $ and the resulting 
(quasi) regular Dirichlet space $ (\mathcal{E}, \dom , \Lm ) $ exists.

To apply \tref{l:23} to Dyson's model we recall a result of Spohn. 
\begin{lem}[Proposition 4 in \cite{sp.2}]\label{l:24}
Let $ \mu $ be the determinantal random point field 
with the sine kernel in \eref{d:21}. 
Then $ \ed $ is closable on $ \Lm $.  
\end{lem}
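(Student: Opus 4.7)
The plan is to establish closability of $\ed$ on $\Lm$ by constructing, for each coordinate direction, a logarithmic derivative $b_i \in L^2_{\mathrm{loc}}(\mu)$ together with the associated integration-by-parts (IBP) formula on the local smooth functions $\dii$. Once IBP holds, the form $\mathcal{E}$ restricted to $\dii$ is represented by the symmetric operator
\begin{align*}
Lg = -\sum_i\bigl(\partial_i^2 g + b_i \partial_i g\bigr)
\end{align*}
with range in $\Lm$, which gives closability by the standard Friedrichs criterion: for any $\mathcal{E}$-Cauchy sequence $f_n \in \di$ with $f_n \to 0$ in $\Lm$ and any $g \in \dii$, one has $\mathcal{E}(f_n,g) = (f_n, Lg)_{\Lm} \to 0$, forcing $\mathcal{E}(f_n,f_n) \to 0$.

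The candidate $b_i$ is built by finite-volume approximation using the matrix-model measures $\hat\mu^N$ from \eref{d:21}. On $\R^N\setminus\{x_i=x_j\}$ the density $e^{-H_N}$ is smooth with log-derivative
\begin{align*}
b_i^N = \sum_{j\neq i}\frac{2}{x_i-x_j} - 2\lambda_N^2 x_i,
\end{align*}
so classical IBP holds for $\hat\mu^N$. Since $\lambda_N = O(N^{-2})$, the confining term disappears on compacts as $N\to\infty$. The correlation-function convergence \eqref{:2m5} combined with Campbell's formula lets one pass IBP to the limit and identifies the formal expression
\begin{align*}
b_i(\theta,x_i) = \lim_{\e \downarrow 0}\sum_{j : |x_j - x_i|>\e}\frac{2}{x_i-x_j}.
\end{align*}

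The main obstacle is to justify this principal-value sum as a genuine $L^2_{\mathrm{loc}}(\mu)$ limit rather than merely a distributional one. The Campbell calculation of $\int_{\{x_i\in B\}}|b_i|^2\,d\mu$ over bounded $B$ reduces to a double integral of $(x_i-x_j)^{-1}(x_i-x_k)^{-1}$ against $\rho_3$, and the apparently divergent near-diagonal contribution is controlled by
\begin{align*}
\rho_2(x,y) = \bar\rho^2 - \mathsf{K}_{\sin}(x-y)^2 = O\bigl((x-y)^2\bigr) \qquad \text{as }y\to x,
\end{align*}
which vanishes quadratically since $\mathsf{K}_{\sin}$ is smooth with $\mathsf{K}_{\sin}(0)=\bar\rho$. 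A parallel short-distance vanishing of $\rho_3$ near the triple diagonal, again forced by the determinantal structure, controls the off-diagonal term. Obtaining these estimates uniformly in $N$ simultaneously legitimizes the limit and places $b_i$ in $L^2_{\mathrm{loc}}(\mu)$, completing the closability argument.
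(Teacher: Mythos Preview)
The paper does not give its own proof of this lemma; it is quoted as Proposition~4 of Spohn~\cite{sp.2} and used as a black box. Your outline is in fact a reasonable sketch of Spohn's argument: establish an integration-by-parts formula on $\di$ with drift given by the principal-value sum $\sum_{j\ne i} 2/(x_i - x_j)$, show this drift lies in $L^2_{\mathrm{loc}}(\mu)$, and deduce closability from the representation $\mathcal{E}(f,g) = (f, Lg)_{\Lm}$ for $g \in \di$.

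There is, however, a substantive omission. You control only the near-diagonal part of the Campbell integral, via $\rho_2(x,x+t) = \bar\rho^2 - \mathsf{K}_{\sin}(t)^2 = O(t^2)$ as $t\to 0$, and the analogous vanishing of $\rho_3$. You say nothing about the \emph{far field}. Because $1/(x_i - x_j)$ is not absolutely summable over an infinite configuration of positive density, the $L^2$ convergence of the principal-value sum also requires cancellation at infinity. Concretely, in the double integral of $(x-y)^{-1}(x-z)^{-1}$ against $\rho_3(x,y,z)$ over $y,z\in\R$, the integrand is not absolutely integrable as $|y|,|z|\to\infty$ since $\rho_3\to\bar\rho^3$; one must exploit the oddness of $t\mapsto 1/t$ together with the decay $\mathsf{K}_{\sin}(t)=O(1/|t|)$ to see that, after the symmetric principal-value cancellation of the constant $\bar\rho^3$, the cluster corrections in $\rho_3-\bar\rho^3$ are integrable against $1/(yz)$. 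The same issue arises when you pass the finite-$N$ IBP to the limit: correlation convergence on compacts (which is all \eqref{:2m5} gives) does not by itself control the tail of $b_i^N$. This long-range step is precisely what singles out the sine kernel beyond mere local Lipschitz continuity of $\mathsf{K}$, and Spohn's proof treats it explicitly; your sketch should at least flag it.
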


We say a diffusion $ \diffusion $ is Dyson's model in infinite dimension 
if it is associated with 
the Dirichlet space $ (\mathcal{E}, \dom , \Lm  ) $ in \tref{l:24}. 
Collecting these we conclude: 
\begin{thm}	\label{l:25}
No collision \eqref{:27} occurs in Dyson's model in infinite dimension. 
\end{thm}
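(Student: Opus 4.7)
The plan is to deduce \tref{l:25} as a direct application of \tref{l:23} to the determinantal random point field $\mu$ with sine kernel. To do so, I need to verify three hypotheses: (i) the kernel $\mathsf{K}_{\sin}$ is locally Lipschitz continuous, (ii) $\mu$ has locally bounded correlation functions (which is used in the background via \lref{l:22}), and (iii) $(\mathcal{E}, \di)$ is closable on $\Lm$.

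For (i), note that $\mathsf{K}_{\sin}(x) = \frac{\bar{\rho}}{\pi x}\sin(\pi x)$ has a removable singularity at $x=0$ with $\mathsf{K}_{\sin}(0) = \bar{\rho}$, and its power series expansion shows that it extends to an entire real-analytic function on $\R$. In particular $\mathsf{K}_{\sin}$ is $C^{\infty}$ and hence Lipschitz on every compact set. The same then holds for the two-point kernel $(x,y) \mapsto \mathsf{K}_{\sin}(x-y)$ on $\R^2$.

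For (ii), by Hadamard's inequality applied to the positive semidefinite matrix $(\mathsf{K}_{\sin}(x_i-x_j))_{1 \le i,j \le n}$, the correlation functions of $\mu$ satisfy
\begin{align*}
\rho _n(x_1,\ldots,x_n) = \det\bigl(\mathsf{K}_{\sin}(x_i-x_j)\bigr)_{1 \le i,j \le n} \le \prod_{i=1}^{n} \mathsf{K}_{\sin}(0) = \bar{\rho}^{\,n},
\end{align*}
so they are in fact uniformly bounded, and a fortiori locally bounded. Point (iii) is precisely the content of \lref{l:24}, which is Spohn's closability result for the sine-kernel Dirichlet form.

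With (i)--(iii) at hand, \tref{l:23} applies directly: the associated diffusion $\diffusion$ exists and satisfies $\mathsf{P}_{\theta}(\sigma _{\mathsf{A}} = \infty ) = 1$ for q.e.\ $\theta$, which is exactly \eqref{:27} for Dyson's model. I do not foresee any real obstacle, since the entire nontrivial content has been packaged into \tref{l:20} (capacity estimate for determinantal kernels) and \lref{l:24} (Spohn's closability); what remains is merely to observe that the sine kernel meets the Lipschitz regularity required by \tref{l:20}.
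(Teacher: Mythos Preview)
Your proposal is correct and matches the paper's own argument: the paper simply writes ``Collecting these we conclude'' before \tref{l:25}, meaning it is obtained by applying \tref{l:23} to the sine-kernel measure, with the closability hypothesis supplied by \lref{l:24}. Your explicit verification of local Lipschitz continuity of $\mathsf{K}_{\sin}$ and of the local boundedness of the correlation functions (via Hadamard's inequality) fills in details the paper leaves tacit, but the logical route is identical.
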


The assumption of the local Lipschitz continuity of the kernel $ \mathsf{K}  $ 
is crucial; 
we next give a collision example when $ \mathsf{K}  $ is 
merely H\"{o}lder continuous. 
We prepare: 
\begin{prop}   \label{l:27}
Assume $ K $ is of trace class. 
Then $ (\mathcal{E}, \di ) $ is closable on $ \Lm $. 
\end{prop}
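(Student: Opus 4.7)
The proof plan exploits the trace class hypothesis to reduce closability to the finite-particle case. Since $\int_{\mathsf{E}} \mathsf{K}(x,x)\,dx = \mathrm{Tr}(K) < \infty$ equals the expected number of particles under $\mu$, the measure $\mu$ concentrates on the set of finite configurations $\bigsqcup_{n\ge 0}\Theta_n$, where $\Theta_n = \{\theta\in\Theta : \theta(\mathsf{E}) = n\}$. Write $\mu = \sum_{n\ge 0}\mu_n$ with $\mu_n = \mu|_{\Theta_n}$; this gives the orthogonal decomposition $\Lm = \bigoplus_{n\ge 0} L^2(\Theta_n,\mu_n)$, and the form $(\mathcal{E},\di)$ is compatible with this splitting. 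Thus closability of $(\mathcal{E},\di)$ reduces to proving closability sector by sector.

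On each $\Theta_n$, unordering identifies (up to a null set) the sector with $\mathsf{E}^n/S_n$, under which $\mu_n$ pulls back to a symmetric probability measure on $\mathsf{E}^n$ with Janossy density $j_n$. Since $\mu$ is determinantal with trace class kernel $K$, $j_n$ is given by the Fredholm expansion
\[
j_n(x_1,\ldots,x_n) = \sum_{k\ge 0}\frac{(-1)^k}{k!}\int_{\mathsf{E}^k}\det\!\bigl(\mathsf{K}(\xi_i,\xi_j)\bigr)_{i,j=1}^{n+k}\,d\xi_{n+1}\cdots d\xi_{n+k},
\]
which can equivalently be written as a sum of squared Slater determinants in the eigenfunctions of $K$. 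The trace class assumption guarantees absolute convergence of the expansion, and continuity of $\mathsf{K}$ propagates to give continuity of $j_n$; the determinantal structure forces $\{j_n=0\}$ to be contained in the Lebesgue null ``collision diagonal'' $\bigcup_{i<j}\{x_i=x_j\}$.

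On each sector the restricted form becomes the classical weighted Dirichlet form
\[
\mathcal{E}_n(f,g) = \frac{1}{2\cdot n!}\int_{\mathsf{E}^n}\sum_{i=1}^n \frac{\partial f}{\partial x_i}\cdot\frac{\partial g}{\partial x_i}\cdot j_n\,dx
\]
on smooth symmetric test functions. Closability of such a finite-dimensional weighted form follows from a standard criterion: writing $j_n = |\Psi_n|^2$ with $\Psi_n$ a determinant of eigenfunctions of $K$, it suffices to check that $\Psi_n\in W^{1,2}_{\mathrm{loc}}(\mathsf{E}^n)$ so that the symmetric operator $-\tfrac12\Delta + (\Delta\Psi_n/\Psi_n)/2$ associated to $\mathcal{E}_n$ is well-defined on a dense core, or, alternatively, to apply a Hamza-type criterion on the open set $\{j_n>0\}$. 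Taking the direct sum of the closed extensions $\overline{\mathcal{E}_n}$ yields a closed extension of $(\mathcal{E},\di)$ on $\Lm$.

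The main obstacle is extracting enough regularity of $\sqrt{j_n}$ (equivalently, of the eigenfunctions of $K$) from the standing hypothesis that $\mathsf{K}$ is merely continuous. If continuity alone proves insufficient for the closability criterion, one can argue by approximation: mollify $\mathsf{K}$ to smooth trace class kernels $\mathsf{K}_{\varepsilon}$, for which the associated forms $\mathcal{E}_{\varepsilon}$ are transparently closable by direct integration by parts against a smooth density, and pass to the $\varepsilon\to 0$ limit using stability of closability under strong convergence of the weights and Mosco-type convergence arguments. This strategy parallels Spohn's treatment in the sine-kernel case (\lref{l:24}), with the substantial simplification that trace class allows honest finite-dimensional integration by parts.
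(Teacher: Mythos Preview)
Your overall strategy---use trace class to reduce to finite configurations, decompose into sectors $\Theta_n$, and prove closability of each finite-dimensional weighted Dirichlet form---is exactly the paper's route (Lemmas~5.1 and~5.2). The paper shows continuity of the Janossy densities via the representation $\sigma^n = \det(\mathrm{Id}-K)\det(L(x_i,x_j))$ with $L = K(\mathrm{Id}-K)^{-1}$ (uniform convergence of the eigenfunction expansion of $L$ following from the trace class hypothesis and $0\le K<1$), then simply invokes the standard fact that a finite-dimensional Dirichlet form with \emph{continuous} weight is closable, and concludes by passing to the increasing limit of closable forms.

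Where your proposal goes wrong is in the justification of that finite-dimensional closability. The Janossy density $j_n$ is \emph{not} of the form $|\Psi_n|^2$ for a single Slater determinant $\Psi_n$; by Cauchy--Binet it is an infinite convex combination $\sum_{|I|=n}\bigl(\prod_{i\in I}\lambda_i\prod_{i\notin I}(1-\lambda_i)\bigr)\,|\det(\varphi_i(x_k))_{i\in I}|^2$, so the ground-state-transform argument via a single $\Psi_n\in W^{1,2}_{\mathrm{loc}}$ is unavailable. Your claim that $\{j_n=0\}$ lies in the collision diagonal is also false: since $\sigma^n\le\rho_n$ and the kernel is positive semidefinite, $j_n$ vanishes whenever $\mathsf{K}(x_i,x_i)=0$ for some $i$ (take $\mathsf{K}(x,y)=\mathsf{m}(x)\mathsf{k}(x-y)\mathsf{m}(y)$ as in Example~2.5 with $\mathsf{m}$ of compact support). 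The Hamza-type criterion you mention as an alternative is in fact the correct and sufficient argument---it needs only continuity of $j_n$, nothing about the structure of its zero set---and is precisely what the paper relies on. The mollification fallback is both unnecessary and not obviously valid: closability is not in general stable under the kind of weight convergence you describe.
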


\begin{thm}	\label{l:26}
Let $ \mathsf{K} (x,y) = \mathsf{m}(x) \mathsf{k}(x-y) \mathsf{m}(y)  $ 
be as in \eref{d:24}. 
Let $ \alpha $ be a constant such that 
\begin{align}\label{:27a}&
0 < \alpha < 1 
.\end{align}
Assume $ \mathsf{m}  $ and $ \mathsf{k}  $ are continuous and 
there exist positive constants 
$ \Ct \label{;21} $ and $ \Ct \label{;22} $ such that 
\begin{align}\label{:27b}&
\cref{;21}t ^{\alpha} 
\le \mathsf{k} (0) - \mathsf{k}(t) \le 
\cref{;22}t ^{\alpha} 
\quad \text{ for } 0 \le t \le 1.
\end{align}
Then $ (\mathcal{E}, \di , \Lm ) $ is closable and the associated diffusion 
satisfies 
\begin{align}\label{:27c}&
\mathsf{P}_{\theta}(\sigma _{\mathsf{A}} < \infty ) = 1 \quad \text{ for q.e.\ } \theta 
.\end{align}
\end{thm}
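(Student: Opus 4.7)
Closability of $\ed$ is immediate from \pref{l:27}: in the setting of \eref{d:24},
\[
\mathrm{Tr}(K)=\mathsf{k}(0)\int\mathsf{m}(x)^{2}\,dx \le \int\mathsf{m}(x)\,dx<\infty
\]
(using $\mathsf{m}\le 1$), so $K$ is of trace class. \lref{l:22} then produces the diffusion $\diffusion$, the correlation functions $\det(\mathsf{K}(x_i,x_j))$ being globally bounded. Moreover $\mathrm{Tr}(K)<\infty$ forces $\mu$ to be supported on finite configurations.

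The collision assertion \eqref{:27c} is the direct converse of \tref{l:20}, and its starting point is the same elementary calculation:
\[
\rho_{2}(x,y) \;=\; \mathsf{m}(x)^{2}\mathsf{m}(y)^{2}\bigl(\mathsf{k}(0)-\mathsf{k}(x-y)\bigr)\bigl(\mathsf{k}(0)+\mathsf{k}(x-y)\bigr).
\]
Combined with \eqref{:27b} and the continuity and strict positivity of $\mathsf{m}$ on a nonempty open set, this yields the two-sided estimate $\rho_{2}(x,y)\asymp |x-y|^{\alpha}$ on compact portions of the diagonal where $\mathsf{m}$ stays bounded below. In \tref{l:20} the linear bound $\rho_{2}(x,y)\lesssim |x-y|$ is used to construct cut-off functions near $\mathsf{A}$ with vanishing Dirichlet energy, yielding $\0(\mathsf{A})=0$. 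Here the rougher lower bound $\rho_{2}(x,y)\gtrsim|x-y|^{\alpha}$ obstructs any such construction and is the mechanism that drives the collision.

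The key step is to turn this local structure of $\rho_{2}$ into a genuine hitting statement by a Bessel comparison. Via Palm calculus the joint distribution of a tagged pair of particles places on the gap $s=|x-y|$ an invariant density $\propto s^{\alpha}$ near $s=0$. Restricting the Dirichlet form to the gap coordinate, with the remaining particles frozen, gives a symmetric one-dimensional diffusion whose speed measure is $\asymp s^{\alpha}\,ds$, i.e.\ a Bessel process of dimension $1+\alpha$ up to a smooth factor. Since $0<\alpha<1$ implies $1+\alpha<2$, this Bessel process reaches the origin in finite time almost surely. Lifting the hitting statement back to the full process $\diffusion$ via a Dirichlet-form domination argument then delivers \eqref{:27c} for $\mu$-a.e.\ configuration with at least two particles.

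The main obstacle is the justification of the Bessel comparison in the presence of the remaining particles and the space-dependent factor $\mathsf{m}$: one must verify that these interactions do not destroy the local $s^{\alpha}$ speed profile of the gap. The upper half of \eqref{:27b} is precisely what controls the corrections from above, while the lower half produces the attractive singularity in the gap. Together they pin $\rho_{2}$ to the Bessel exponent $\alpha$ and make the two-sided comparison possible; without the matching upper bound the reduction to a single Bessel process, rather than merely a capacity lower bound, would not be available.
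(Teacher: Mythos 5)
The closability step is the same as the paper's (trace class $\Rightarrow$ \pref{l:27}). For the collision part you correctly identify $\rho_2(x,y)\asymp |x-y|^\alpha$ near the diagonal on the support of $\mathsf{m}$, and you correctly see $\alpha<1$ as the threshold; but the route you propose is substantively different from the paper's and is not carried through. The paper argues entirely at the capacity level: since $\mu$ lives on finite configurations, the two--particle sector $\Theta^2$ is invariant, and the restricted form on the small interval $I$ satisfies $(\mathcal{E}^2_1,\dom^2_1)\le(\mathcal{E},\dom)$, hence $\0^2_1\le\0$; one then shows $\0^2_1(\mathsf{B})>0$ for the two--particle collision set $\mathsf{B}$ by a standard positive--capacity argument in $\R^2$ with a degenerate weight $\gtrsim|x-y|^\alpha$, $\alpha<1$ (cf.\ [FOT, Example~2.2.4]). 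No process--level Bessel comparison, freezing, or ``lifting'' is required.

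Two concrete gaps in your account. First, the weight that actually enters the restricted Dirichlet form is the \emph{density function} $\sigma^2_I$, not the correlation function $\rho_2$. The paper's \lref{l:53} proves $\sigma^2_I(x,x+t)\ge c\,t^\alpha$ via $\sigma^2_I\ge\rho_2-\int_I\rho_3$, and the upper half of \eqref{:27b} is used precisely to bound $\rho_3$ from above so that the $\rho_2$ lower bound survives on a sufficiently small interval. Your reading of the upper bound (``pinning the Bessel exponent'') does not capture this and, without it, you have only a lower bound on $\rho_2$, which by itself does not control $\sigma^2_I$ from below. Second, the step you yourself flag as the main obstacle --- reducing to a one--dimensional gap diffusion with the other particles frozen and then transferring the hitting statement back to $\diffusion$ --- is exactly where a rigorous argument is needed and is not supplied; the paper avoids the whole issue because the capacity inequality $\0^2_1\le\0$ is immediate from form domination and suffices. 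So: right intuition (Bessel of dimension $1+\alpha<2$ hits $0$), wrong working currency (you need a capacity/density estimate, and you need $\sigma^2$, not $\rho_2$), and the hard step in your plan is left unresolved.
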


Unfortunately the closability of the pre-Dirichlet form 
$ (\mathcal{E} , \di ) $ on $ \Lm $ has not yet proved for 
determinantal random point fields of locally trace class except the sine kernel. 
So we propose a problem: 
\begin{prob}\label{p:22}
\thetag{1} Are pre-Dirichlet forms $ (\mathcal{E} , \di ) $ on $ \Lm $ 
closable when $ \mu  $ are determinantal random fields with continuous kernels? 
\\
\thetag{2} 
Can one construct stochastic dynamics (diffusion processes) associated with 
pre-Dirichlet forms $ (\mathcal{E} , \di ) $ on $ \Lm $. 
\end{prob}
We remark one can deduce the second problem from the first one (see \cite[Theorem 1]{o.dfa}). 
We conjecture that $ (\mathcal{E} , \di , \Lm ) $ are always closable. 
As we see above, 
in case of trace class kernel, this problem is solved by \pref{l:27}. 
But it is important to prove this for determinantal random point field of 
{\em locally} trace class. 
This class contains Airy kernel and Bessel kernel and other nutritious examples. 
We also remark for interacting Brownian motions with Gibbsian equilibriums 
this problem was settled successfully (\cite{o.dfa}). 

In the next theorem we give a partial answer for \thetag{2} of Problem~\ref{p:22}. 
We will show one can construct a stochastic dynamics in infinite volume, 
which is canonical in the sense that \thetag{1} it is the strong resolvent limit of 
a sequence of finite volume dynamics and that \thetag{2} it coincides with 
$ (\mathcal{E}, \dom ) $ whenever $ (\mathcal{E}, \di ) $ is closable 
on $ \Lm $.  

For two symmetric, nonnegative forms $ (\mathcal{E}_1,\dom _1 ) $ and 
$ (\mathcal{E}_2 ,\dom _2 ) $, we write 
$ (\mathcal{E}_1,\dom _1 ) \le (\mathcal{E}_2,\dom _2 ) $ if 
$ \dom _1 \supset \dom _2 $ and 
$ 
\mathcal{E}_1 (\mathfrak{f}, \mathfrak{f} ) \le 
\mathcal{E}_2 (\mathfrak{f},\mathfrak{f} )$ 
for all $ \mathfrak{f} \in \dom _2 $. 
 Let $ (\mathcal{E}^{\text{reg}} , \dom ^{\text{reg}}) $ denote the regular part of  $ (\mathcal{E} , \di ) $ on $ \Lm $, that is, 
$ (\Ereg , \Dreg ) $ is closable on $ \Lm $ and in addition satisfies the following:  
\begin{align*}&
(\mathcal{E}^{\text{reg}} , \dom ^{\text{reg}}) \le (\mathcal{E} ,\di  )
,\\
\intertext{and for all closable forms such that 
$ (\mathcal{E}', \mathcal{D}'  ) \le (\mathcal{E} ,\di  )  $}
 &
(\mathcal{E}', \mathcal{D}'  ) \le (\Ereg , \Dreg ) 
.\end{align*} 
It is well known that such a $ (\Ereg , \Dreg ) $ exists uniquely and called 
the maximal regular part of $ (\mathcal{E}, \dom ) $. 
Let us denote the closure by the same 
symbol $ (\Ereg , \Dreg ) $. 

Let $ \map{\pi _r}{\Theta }{\Theta } $ be such that 
$ \pi _r (\theta ) = \theta (\cdot \cap \{ x \in \mathsf{E} ; |x| < r \} )$. 
We set 
\begin{align*}&
\dom _{\infty , r} = \{ \mathfrak{f} \in \di \, ;\, \mathfrak{f}  \text{ is }
\sigma[\pi _r] \text{-measurable}\} 
.\end{align*}
We will prove $ (\mathcal{E}, \dom _{\infty , r}  ) $ are closable on 
$ \Lm $. These are the finite volume dynamics we are considering.  

Let $ \mathbb{G}_{\alpha } $ 
(resp.\ $ \mathbb{G}_{r , \alpha } $) $ (\alpha >0) $ denote the 
$ \alpha $-resolvent of the semi-group associated with the closure of 
$ (\Ereg ,\Dreg ) $ (resp.\ $ (\mathcal{E}, \dom _{\infty , r}  $)) on 
$ \Lm $.   
\begin{thm}	\label{l:28} 
\thetag{1} 
$ (\Ereg , \Dreg ) $ on $ \Lm $ is a quasi-regular Dirichlet form. 
So the associated diffusion exists. 
\\
\thetag{2} $ \mathbb{G}_{r , \alpha } $ converge to $ \mathbb{G}_{\alpha} $  
strongly in $ \Lm $ for all $ \alpha >0 $.
\end{thm}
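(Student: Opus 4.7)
My plan is to realize $(\Ereg, \Dreg)$ concretely as the monotone limit of the finite-volume closed forms coming from the cutoffs $\dom_{\infty,r}$; once this identification is in place, \thetag{2} is automatic from the monotone approximation and \thetag{1} follows from quasi-regularity of the explicit limit.

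First I would show that each $(\mathcal{E}, \dom_{\infty,r})$ is closable on $\Lm$. For any $\sigma[\pi_r]$-measurable $\mathfrak{f}$, the value $\mathcal{E}(\mathfrak{f},\mathfrak{f})$ depends only on the pushforward $\mu_r = \mu \circ \pi_r^{-1}$, which is the determinantal random point field on the bounded set $\{x \in \mathsf{E} : |x|<r\}$ with kernel $1_{B_r} K 1_{B_r}$. Since $K$ is locally trace class, this restricted kernel is of trace class, so \pref{l:27} applied to $\mu_r$ yields closability, which lifts back to $\Lm$ via $\pi_r$. Denote the closure by $(\mathcal{E}^{(r)}, \dom^{(r)})$. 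Because $\sigma[\pi_{r_1}] \subset \sigma[\pi_{r_2}]$ when $r_1 \le r_2$, the closed domains $\dom^{(r)}$ are monotone increasing, and the form values agree on the common pre-closures. By Kato's monotone convergence theorem for nonnegative symmetric forms, the resolvents $\mathbb{G}_{r,\alpha}$ converge strongly in $\Lm$ to the resolvent $\mathbb{G}_\alpha^\infty$ of a closed symmetric form $(\mathcal{E}^\infty, \dom^\infty)$ whose domain is the form-closure of $\bigcup_r \dom^{(r)}$.

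Next I would identify $(\mathcal{E}^\infty, \dom^\infty) = (\Ereg, \Dreg)$, which delivers \thetag{2}. Because every $\mathfrak{f} \in \dii$ is local, it is $\sigma[\pi_{r_0}]$-measurable for some $r_0$, so $\di = \bigcup_r \dom_{\infty,r} \subset \dom^\infty$ with matching form values. Thus $(\mathcal{E}^\infty, \dom^\infty)$ is closable and satisfies $(\mathcal{E}^\infty, \dom^\infty) \le (\mathcal{E}, \di)$, so maximality of the regular part gives $(\mathcal{E}^\infty, \dom^\infty) \le (\Ereg, \Dreg)$. For the reverse inequality, since $\dom_{\infty,r} \subset \di \subset \Dreg$ and $\Ereg \le \mathcal{E}$ on $\di$, any $\mathcal{E}$-Cauchy, $L^2$-convergent sequence in $\dom_{\infty,r}$ is also $\Ereg$-Cauchy, and its limit in $\Dreg$ coincides with the limit in $\dom^{(r)}$ by $L^2$-uniqueness; hence $\dom^{(r)} \subset \Dreg$ with $\Ereg \le \mathcal{E}^{(r)}$ there, and taking the form-closure of the union yields $\dom^\infty \subset \Dreg$ with $\Ereg \le \mathcal{E}^\infty$ there. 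Combined, this gives equality.

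For \thetag{1}, quasi-regularity of $(\Ereg, \Dreg)$ on the Polish space $\Theta$ would be obtained by the standard route for configuration spaces: cutoff functions $\chi_N(\theta) = \Phi(\theta(B_N))$ with smooth compactly supported $\Phi$ and $B_N \uparrow \mathsf{E}$ belong to $\dom^{(N)} \subset \Dreg$, and their sub-level sets, compact in the vague topology on $\Theta$, furnish an $\Ereg$-nest; density and point-separation are supplied by the local cylinder functions in $\di \subset \Dreg$. The diffusion follows from \cite{fot} since $\mathbb{D}[\mathfrak{f},\mathfrak{g}] = 0$ pointwise for $\mathfrak{f},\mathfrak{g} \in \dii$ with disjoint supports, so each $(\mathcal{E}, \dom_{\infty,r})$ is local and the property is preserved under the monotone limit. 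The main obstacle is the second inequality in the identification step --- matching the monotone cutoff limit with the abstractly defined $\Dreg$ --- which rests on the density of local functions inside the regular part and on carefully tracking the interplay between the $\mathcal{E}$- and $\Ereg$-form norms through the closure procedure.
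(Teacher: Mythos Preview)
Your route is genuinely different from the paper's. The paper does not argue directly: it verifies that the finite--volume forms $(\mathcal{E},\dom_{\infty,r})$ are closable via \lref{l:61} and \lref{l:62} (continuity of the densities $\sigma_r^n$ from the series \eqref{:4u}), checks that this is exactly hypothesis (A.1$^*$) of \cite{o.dfa}, notes (A.2) there holds for determinantal fields, and then invokes Theorem~1, Corollary~1 and Lemma~2.1\thetag{3} of \cite{o.dfa} for both quasi-regularity and the strong resolvent convergence. Your closability step via \pref{l:27} applied to the restricted (trace class) kernel $1_{B_r}K1_{B_r}$ is a pleasant shortcut around \lref{l:61}--\lref{l:62}, and your direct verification of quasi-regularity is in the spirit of what \cite{o.dfa} actually does.

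There is, however, a real gap in your identification step. For a \emph{decreasing} sequence of closed forms (domains increasing, as here), the monotone convergence theorem does \emph{not} produce the form-closure of $\bigcup_r \dom^{(r)}$; it produces the largest closed form dominated by all the $(\mathcal{E}^{(r)},\dom^{(r)})$, i.e.\ the closed regular part of the union form $(\tilde{\mathcal{E}},\tilde{D})$ with $\tilde D=\bigcup_r\dom^{(r)}$. Your claim of ``matching form values'' $\mathcal{E}^\infty=\mathcal{E}$ on $\di$ would force $(\mathcal{E},\di)$ to be closable (as the restriction of a closed form), which is precisely the open Problem~\ref{p:22}. What is true is only $\mathcal{E}^\infty\le\mathcal{E}$ on $\di$. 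Fortunately your two inequalities survive this correction: $(\mathcal{E}^\infty,\dom^\infty)\le(\mathcal{E},\di)$ still holds, and since $(\Ereg,\Dreg)$ is the maximal closed form $\le(\mathcal{E},\di)$ one gets $(\mathcal{E}^\infty,\dom^\infty)\le(\Ereg,\Dreg)$; conversely, your argument that $\dom^{(r)}\subset\Dreg$ with $\Ereg\le\mathcal{E}^{(r)}$ shows $(\Ereg,\Dreg)$ is a closed form $\le$ every $(\mathcal{E}^{(r)},\dom^{(r)})$, hence $\le(\mathcal{E}^\infty,\dom^\infty)$ by maximality of the latter. So the conclusion stands once the monotone limit is described correctly. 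Your quasi-regularity sketch also needs tightening: a single sublevel set $\{\theta(B_N)\le k\}$ is not compact in $\Theta$; one needs sets of the form $\{\theta(B_N)\le a_N\text{ for all }N\}$, which is what the machinery in \cite{o.dfa} supplies.
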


\begin{rem}\label{r:222} 
We think the diffusion constructed in \tref{l:28} is a reasonable one because 
of the following reason. 
\thetag{1} By definition the closure of $ (\Ereg , \Dreg ) $ equals 
$ (\mathcal{E}, \dom  ) $ when $ (\mathcal{E}, \di ) $ is closable. 
\thetag{2} One naturally associated Markov processes on 
$ \Theta _r $, where $ \Theta _r $ is the set of configurations on 
$ \mathsf{E} \cap \{ |x| < r \}   $.  
So \thetag{2} of \tref{l:28} implies the diffusion is the strong resolvent limit of finite volume dynamics. 
\end{rem}
\begin{rem}\label{r:223}
If one replace $ \mu $ by the Poisson random measure $ \lambda $ 
whose intensity measure is the Lebesgue measure and consider 
the Dirichlet space $ (\mathcal{E}^{\lambda},\dom ) $ 
on $ L(\Theta , \lambda ) $, then the associated $ \Theta $-valued 
diffusion is the $ \Theta $-valued Brownian motion $ \mathbb{B} $, 
that is, it is given by 
\begin{align*}&
\mathbb{B}_t = \sum _{i=1}^{\infty} \delta _{B^i_t} 
,\end{align*}
where $ \{ B^i_t \} $ ($ i \in \N $ ) are infinite amount of independent 
Brownian motions. 
In this sense we say in Abstract that the Dirichlet form given by \eqref{:15} 
for Radon measures in $ \Theta $ {\em canonical}.  
We also remark such a type of local Dirichlet forms are often called 
distorted Brownian motions. 
\end{rem}

\section{Preliminary} \label{s:3}
Let $ \1 = (-\7,\7)^d \cap \mathsf{E} $ and 
$ \ya =\{\theta \in\Theta ;\theta(\1 )= \6 \} $. 
We note $\Theta =\sum _{\6 = 0} ^{\infty}\ya $. 
Let $\yaQ  $ be the $ \6 $ times product of $ \1 $.  
We define $ \map{\pi_r}{\Theta  }{\Theta  } $ by $\pi _{\7}(\theta )=\theta(\cdot\cap \1 )$.  
A function $\map{\mathbf{x}}{\ya }{\yaQ } $ is called a $\yaQ  $-coordinate of $\theta $ if 
\begin{align}\label{:31}&
\pi _{\7}(\theta )=\sum _{k=1} ^{\6 } \delta_{x_{k}(\theta )}, 
\qquad \mathbf{x}(\theta )=(x_{1}(\theta ),\ldots,x_{\6 }(\theta )).
							\end{align}
Suppose $ \map{\ff }{\Theta }{\R }$ is $ \sigma [\pi _{\7}]$-measurable. 
Then for each $ n = 1,2,\ldots $ there exists a unique permutation invariant function 
$ \map{\frn }{\yaQ }{\R } $ such that 
\begin{align}\label{:32}&
 \ff (\theta ) = \frn (\mathbf{x}(\theta ) ) \quad \text{ for all } \theta \in \ya 
.							\end{align}

We next introduce mollifier. 
Let $\map{j}{\R }{\R } $ be a non-negative, smooth function such that 
$ j (x)= j (|x|)$, 
$\int_{\R^d} j dx=1$ and $ j (x)=0$ for $ |x| \geq \frac{1}{2}$. 
Let $ j _\e =\e  j (\cdot/\e )$ and 
$ j ^{\6}_{\e } (x_1,\ldots,x_{\6})=\prod _{i=1}^{\6}  j _\e(x_i)$.   
For a $ \sigma [\pi _{\7}]$-measurable function $\ff $ we set 
$\map{\mathfrak{J}  _{\7,\e }\ff }{\Theta }{\R }$ by 
\begin{align} \label{:33} 
\mathfrak{J}  _{\7,\e }\ff (\theta ) &= 
\begin{cases}
j _{\e }^{\6} * \hat{f}^{\6}_r (\mathbf{x}(\theta )) & \text{ for } \theta \in \ya , \6 \ge 1 
\\
\ff (\theta ) & \text{ for } \theta \in \Theta ^0_r, 
\end{cases}
\end{align}
where $ \frn  $ is given by \eqref{:32} for $\ff $, and 
$\map{\hat{f}^{\6}_{\7}}{\R^{d\6}}{\R}$ is the function defined by 
$\hat{f}^{\6} _{\7}(x)=f^{\6} _{\7}(x)$ for $ x \in \yaQ $ and 
$\hat{f}^{\6} _{\7}(x)=0$ for $ x \not\in \yaQ $.  
Moreover $\mathbf{x}(\theta ) $ is an $ \1^{\6}  $-coordinate of $\theta \in \ya $, 
and $\ast $ denotes the convolution in $ \R ^{\6} $. 
It is clear that $ \mathfrak{J}  _{\7,\e }\ff $ is $ \sigma [\pi _{\7}]$-measurable. 

We say a function $ \map{\ff }{\Theta }{\R } $ is local if $ \ff $ is 
$ \sigma [\pi _{\7}]$-measurable for some $ \7 < \infty $. 
For $ \map{\ff }{\Theta }{\R } $ and $ \6 \in \N \cup \{ \infty  \}  $ 
there exists a unique permutation function 
$ f^{\6} $ such that $ \ff (\theta ) = f^{\6}(x_1,\ldots) $ 
for all $ \theta \in \Theta ^{\6 } $. 
Here 
$ \Theta ^{\6} = \{ \theta \in \Theta \, ;\, \theta (\mathsf{E}  ) = \6 \}  $, 
and $  \theta = \sum _{i} \delta _{x_i}$. 
A function $ \ff $ is called smooth if $ f^{\6 } $ is smooth for all $ \6 \in \N \cup \{ \infty  \}  $. Note that a $ \sigma [\pi _{\7}]$-measurable function $ \ff $ is smooth if and only if 
$ \frn $ is smooth for all $ n \in \N $.

\section{Proof of \tref{l:23}} \label{s:4}
We give a sequence of reductions of \eqref{:26}. Let $ \mathbf{A}  $ denote the set 
consisting of the sequences $ \mathbf{a} = (a_r)_{r \in \N} $ satisfying the following: 
\begin{align}\label{:41a}&
a_r \in \Q \quad \text{ for all }r \in \N ,
\\ & \label{:41b}
a_r = 2r + r_0   \quad \text{ for all sufficiently large }r \in \N
,\\ & \label{:41c}
2  \le  a_1 ,\ 1 \le a_{r+1} - a_r \le 2 \quad \text{ for all }r \in \N 
. 							\end{align}
Note that the cardinality of $ \mathbf{A}  $ is countable by \eqref{:41a} and \eqref{:41b}. 

Let  $ \mathbb{I}= \{ 2,3,\ldots, \}^3 $.    
For $ (\7,\6,\8) \in \mathbb{I}$ and $ \mathbf{a} = (a_{\7}) \in \mathbf{A}  $ we set  
\begin{align*}&
 \za = \{ \theta \in \Theta  \, ;\, \theta ( \Ia ) = \6 \}  
 \\&
  \zb = \{ \theta \in \Theta  \, ;\, \theta ( \Ia ) = \6 ,\ 
  \theta ( \bar {\2} _{a_{\7} + \frac{1}{\8} } \backslash \Ia  ) = 0 \} 
.\end{align*}
Here 
$ \bar {\2} _{a_{\7} + \frac{1}{\8} } $  
is the closure of $  {\2} _{a_{\7} + \frac{1}{\8} } $, 
where $ \1 = (-\7,\7)^d \cap \mathsf{E} $ as before. 
We remark $ \zb $ is an open set in $ \Theta  $.   
We set 
\begin{align}& \label{:42}
 \Ae = \{   \theta = \sum _{i} \delta _{x_i}  \, ;\, \theta  \in \zb \text{ and $ \theta  $ satisfy } 
\\ &\quad \quad \notag 
| x_i - x_j | < \e   \text{ and } x_i, x_j \in \2 _{a_{\7} - 1}  \text{ for some }i \not= j \} 
.\end{align}
It is clear that $ \Ae  $ is an open set in $ \Theta  $. 
\begin{lem}   \label{l:43} 
Assume that for all 
$ \mathbf{a} \in \mathbf{A}  $ and $  (\7,\6,\8) \in \mathbb{I}$ 
\begin{align}\label{:43}&
\inf _{ 0 < \e  < 1/2m} \0 ( \Ae ) = 0
.							\end{align}
Then \eqref{:26} holds. 
\end{lem}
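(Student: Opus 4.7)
The plan is to cover $\mathsf{A}$ by a countable family of sets whose capacities vanish under the hypothesis \eqref{:43}, and then invoke monotonicity and countable subadditivity of the capacity $\0$.

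The first and main step is a set-theoretic reduction: I will show that every $\theta\in\mathsf{A}$ belongs to $\Ae$ for some $\mathbf{a}\in\mathbf{A}$, some $(\7,\6,\8)\in\mathbb{I}$, and all sufficiently small $\varepsilon>0$. Given $\theta=\sum_i\delta_{x_i}\in\mathsf{A}$, fix a collision site $x^\ast\in\mathsf{E}$ with $\theta(\{x^\ast\})\ge 2$. Since $\theta$ is locally finite, only finitely many of the $x_i$ lie in any bounded region. Picking $R\in\N$ large enough that $x^\ast\in\2_{R-1}$, the finitely many points of $\theta$ in $\bar{\2}_R$ take only finitely many values of $\max_{1\le k\le d}|x_i^{(k)}|$, so I can select a rational $a^\ast$ with $x^\ast\in\2_{a^\ast-1}$ that sits in a gap between two consecutive such sup-norms. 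Choosing an integer $\8\ge 2$ with $1/\8$ less than the width of this gap, no point of $\theta$ then satisfies $a^\ast\le\max_k|x_i^{(k)}|\le a^\ast+1/\8$. The admissibility conditions \eqref{:41a}--\eqref{:41c} allow me to embed $a^\ast$ as the $\7$th entry of a sequence $\mathbf{a}\in\mathbf{A}$ for a suitable $\7\ge 2$. Setting $\6:=\theta(\Ia)\ge 2$, the three clauses defining $\Ae$ are now all met: $\theta\in\zb$ by the buffer just secured, and the two coincident copies of $x^\ast$ both lie in $\2_{a_\7-1}$ with mutual distance $0<\varepsilon$.

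In the second step, for each admissible tuple define
\[
\mathsf{B}^{\mathbf{a}}(\7,\6,\8):=\bigcap_{0<\varepsilon<1/2\8}\Ae.
\]
The first step yields $\mathsf{A}\subset\bigcup \mathsf{B}^{\mathbf{a}}(\7,\6,\8)$, a countable union because $\mathbb{I}$ is countable and, by \eqref{:41a}--\eqref{:41b}, so is $\mathbf{A}$. Monotonicity of $\0$ under inclusion---immediate from its definition via outer approximation by open sets---gives
\[
\0\bigl(\mathsf{B}^{\mathbf{a}}(\7,\6,\8)\bigr)\le\inf_{0<\varepsilon<1/2\8}\0(\Ae)=0
\]
by hypothesis \eqref{:43}. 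Countable subadditivity of $\0$ then delivers $\0(\mathsf{A})=0$, which is \eqref{:26}.

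The main obstacle is the combinatorial construction in the first step: given $\theta\in\mathsf{A}$, one must simultaneously place the collision point strictly inside $\2_{a_\7-1}$, find a clean buffer shell $\bar{\2}_{a_\7+\frac{1}{\8}}\setminus\Ia$ containing no point of $\theta$, and realise the chosen $a_\7$ inside the prescribed class $\mathbf{A}$. Local finiteness of $\theta$ supplies the requisite gap for the buffer, and the mild rigidity of \eqref{:41a}--\eqref{:41c} leaves enough flexibility to prescribe a single rational value $a_\7$; the remaining verifications are bookkeeping.
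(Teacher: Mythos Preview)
Your argument is correct and follows essentially the same route as the paper: cover $\mathsf{A}$ by countably many pieces indexed by $\mathbf{A}\times\mathbb{I}$, observe each piece sits inside every $\Ae$, and conclude via monotonicity and countable subadditivity of $\0$. Your set $\mathsf{B}^{\mathbf{a}}(\7,\6,\8)=\bigcap_{0<\varepsilon<1/2\8}\Ae$ coincides with the paper's explicitly defined collision set $\mathsf{A}^{\mathbf{a}}(\7,\6,\8)$ (finitely many particles in $\Ia$ force a single pair to witness the near-collision for all small $\varepsilon$), and you supply the combinatorial details of the covering $\mathsf{A}\subset\bigcup\mathsf{A}^{\mathbf{a}}(\7,\6,\8)$ that the paper simply asserts.
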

\begin{proof}
Let 
\begin{align}\notag  
\mathsf{A}^{\mathbf{a} } (\7,\6,\8)= &\{   \theta = \sum _{i} \delta _{x_i}  \, ;\, \theta \in \zb \text{ and $ \theta  $ satisfy } 
\\ \quad \quad   &\notag 
\ x_i = x_j  \text{ and } x_i, x_j \in \2 _{a_{\7} - 1}  \text{ for some }i \not= j \}
.\end{align}
Then 
$ \mathsf{A}= \bigcup _{\mathbf{a}\in \mathbf{A}  }\bigcup _{ (\7,\6,\8) \in \mathbb{I}} \, \mathsf{A}^{\mathbf{a} } (\7,\6,\8) 
 $. 
Since $ \mathbf{A}  $ and $ \mathbb{I}$ are countable sets and the capacity is 
sub additive, \eqref{:26} follows from 
\begin{align}\label{:44}&
\0 (\mathsf{A}^{\mathbf{a} } (\7,\6,\8) )= 0 \quad \text{ for all }  
\mathbf{a} \in \mathbf{A}, \ (\7,\6,\8) \in \mathbb{I} 
.							\end{align}
Note that 
$  \mathsf{A}^{\mathbf{a} } (\7,\6,\8) \subset \Ae  $. 
So \eqref{:43} implies \eqref{:44} by the monotonicity of the capacity, 
which deduces \eqref{:26}. 
\end{proof}

Now fix $ \mathbf{a} \in \mathbf{A}  $ and $  (\7,\6,\8) \in \mathbb{I}$ 
and suppress them from the notion. Set 
\begin{align}\label{:45}&
\Aa = \mathsf{A}_{\e /2 }^{\mathbf{a} } (\7,\6,\8) 
,\quad 
\A = \Ae 
,\quad 
\Ab = \mathsf{A}_{1 + \e  }^{\mathbf{a} } (\7,\6,\8) 
.\end{align}
and let $ \map{\4 }{\R}{\R} $ ($ 0 < \e < 1/m < 1 $) such that 
\begin{align}\label{:46}&
\4 (t) = 
\begin{cases} 
2 								& ( |t| \le \e )
\\  2 \log |t| / \log \e  		&( \e \le |t| \le 1  ) 
\\0 							& ( 1  \le |t| )
.\end{cases}
\end{align}
We define $ \map{\5}{\Theta  }{\R } $ by $ \5 (\theta )= 0 $ for $ \theta \not\in \zb $ and 
\begin{align*}&
\5 (\theta ) = 
\sum _{ x _i , \,  x _j \in \2 _{a_{\7} - 1}, \  j\ne i} 
\4 (x _i - x _j )
\quad \text{ for }\theta \in \zb 
.\end{align*}
Here we set $ \5 (\theta ) = 0 $ if the summand is empty. 
Let $ \mathfrak{g}_{\e  } = \mathfrak{J}_{a_{\7} + \frac{1}{m}, \e / 4} \5 $. 
Here $ \mathfrak{J}_{a_{\7} + \frac{1}{m}, \e / 4}  $ is 
the mollifier introduced in  \eqref{:33}. 

\begin{lem}   \label{l:44}
For $ 0 < \e  < 1/2m  $,  $ \mathfrak{g}_{\e  } $ satisfy the following:
\begin{align}
\label{:47}&
\mathfrak{g}_{\e  } \in \di 
&& 
\\  & \label{:48b}
\mathfrak{g}_{\e  } (\theta ) \ge 1 
&&
\text{ for all } \theta \in \A  
\\ & \label{:48a}
0 \le \mathfrak{g}_{\e  } (\theta ) \le  n(n+1) 
&& 
\text{ for all } \theta \in \Theta 
\\ & \label{:48c}
 \mathfrak{g}_{\e  } (\theta ) =0  
&& \text{ for all } \theta \not\in \Ab   
\\ & \label{:49b}
\mathbb{D}[\mathfrak{g}_{\e  } , \mathfrak{g}_{\e  } ] (\theta ) = 0 
&& 
\text{ for all } \theta \not\in \Aee 
\\ \label{:49a}&
\mathbb{D}[\mathfrak{g}_{\e  } , \mathfrak{g}_{\e  } ] (\theta ) \le 
\frac{ \cref{;41} }{( \log \e \  \min  |x_i - x_j | ) ^2 } 
&& \text{ for all } \theta \in  \Aee 
.				\end{align}
Here $ \theta = \sum \delta _{x_k} $ and the minimum in \eqref{:49a} is taken over 
$ x_i , x_j  $ such that 
$$  x _i , \,  x _j \in \2 _{a_{\7} - 1}, \ \e / 2 \le |x_i - x_j | \le 1 + \e 
,$$ 
and 
$\Ct \label{;41} \ge 0 $ is a constant independent of $ \e  $ 
($ \cref{;41} $ depends on $  (\7,\6,\8)  $ ). 
\end{lem}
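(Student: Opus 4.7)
The plan is to verify the six properties one by one, working outward from pointwise estimates on the unmollified sum $\5$ through the mollifier $\mathfrak{J}_{a_{\7}+1/\8,\,\e/4}$. The mollifier convolves against $j_{\e/4}^{\6}$, whose support is contained in $\{|y_k|\le\e/8\}^{\6}$; the buffer $1/\8$ built into the definition of $\zb$ exceeds this shift whenever $\e<1/(2\8)$, so the particle count in $\Ia$ is preserved and $\mathfrak{g}_{\e}$ is well-defined on all of $\Theta$. Combined with $\sigma[\pi_{a_{\7}+1/\8}]$-measurability and smoothness inherited from $j$, this accounts for the locality part of \eqref{:47}.

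The uniform bound \eqref{:48a} follows from $0\le\4\le 2$ and a polynomial count of the pairs in the defining sum of $\5$, which yields $0\le\5\le\6(\6+1)$; the nonnegative, mean-one mollifier preserves this range. The vanishing statements \eqref{:48c} and \eqref{:49b} come from $\4(t)=0$ on $|t|\ge 1$ and $\4'(t)=0$ outside $[\e,1]$, respectively, each widened by at most $\e/4$ in pairwise differences under the shift; these enlargements match exactly $\Ab$ and $\Aee$. For the lower bound \eqref{:48b}, given $\theta\in\A$ I would pick a witnessing pair $x_i,x_j\in\2_{a_{\7}-1}$ with $|x_i-x_j|<\e$; after shifts $|y_k|\le\e/8$ the pair distance is at most $5\e/4$, and $\4(5\e/4)=2+2\log(5/4)/\log\e\ge 1$ for $\e$ small. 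Integration against the nonnegative mollifier kernel together with nonnegativity of the other summands of $\5$ then gives $\mathfrak{g}_{\e}(\theta)\ge 1$.

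For the derivative estimate \eqref{:49a} I would pass $\partial/\partial x_k$ through the convolution onto $\5$ and use $|\4'(t)|\le 2/(|t|\cdot|\log\e|)$ on $\e\le|t|\le 1$. For each fixed $k$ there are at most $\6-1$ nonzero terms of this form in $\partial\5/\partial x_k$, each bounded by $C/(|\log\e|\cdot\min|x_i-x_j|)$; squaring and summing over the finitely many active coordinates yields \eqref{:49a} with a constant $\cref{;41}$ depending only on $\6$. The remaining part of \eqref{:47} is $\mathfrak{g}_{\e}\in\Lm$ (from \eqref{:48a}) and $\mathcal{E}(\mathfrak{g}_{\e},\mathfrak{g}_{\e})<\infty$; the latter follows from \eqref{:49b}, \eqref{:49a}, and the fact that the $2$-correlation of the determinantal $\mu$ controls $1/(\min|x_i-x_j|)^{2}$ on the bounded set $\Ab$ — this is where the Lipschitz hypothesis on $\mathsf{K}$ is used later in \tref{l:20}, while for \lref{l:44} only local control of the $2$-point function is needed.

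The main obstacle is the bookkeeping in \eqref{:48b}: after shifting each coordinate by $\e/8$ one must ensure that the witnessing pair still lies in the summation region $\2_{a_{\7}-1}$ of $\5$ at the shifted configuration, so that its contribution is actually counted. This is arranged by the gap of $1$ separating $\2_{a_{\7}-1}$ from the window $\Ia$ in which $\zb$ prescribes the particle count, which comfortably absorbs shifts of size $\e/8$; if needed, the summation radius can be enlarged slightly (e.g.\ replacing $\2_{a_{\7}-1}$ by $\2_{a_{\7}-1/2}$) without disturbing the other five items, which is legitimate because the freedom \eqref{:41c} in the choice of $\mathbf{a}\in\mathbf{A}$ leaves room for such adjustments before applying \lref{l:43}.
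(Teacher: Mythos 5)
The paper's own proof of this lemma is essentially a stub: it cites \cite[Lemma 2.4 (1)]{o.dfa} for \eqref{:47} and declares the remaining five items ``clear from a direct calculation.'' Your proposal supplies that direct calculation, which is the right thing to do, and the pointwise estimates on $\4$ and $\4'$ that you use (in particular $|\4'(t)|\le 2/(|t|\,|\log\e|)$ on $[\e,1]$ and $\4=2$ inside, $\4=0$ outside) are exactly what is needed for \eqref{:49a} and \eqref{:49b}.

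There is, however, a genuine soft spot in your treatment of \eqref{:48b}, and the two remedies you offer do not actually close it. First, the ``gap of $1$ separating $\2_{a_\7-1}$ from $\Ia$'' does \emph{not} keep the witnessing pair inside the summation window $\2_{a_\7-1}$ after the mollifier shift: a particle at distance less than $\e/8$ from the boundary of $\2_{a_\7-1}$ simply moves out of the window, and the gap between $\2_{a_\7-1}$ and $\Ia$ is irrelevant to that. (There is a second, related leak you do not mention: a particle of $\theta\in\zb$ near $\partial I_{a_\7}$ can move into $\bar{\2}_{a_\7+1/m}\setminus I_{a_\7}$ under the shift, so the shifted configuration may leave $\zb$, on which $\5$ vanishes by definition.) Second, the fix of replacing $\2_{a_\7-1}$ by $\2_{a_\7-1/2}$ \emph{in the definition of $\5$ only} breaks \eqref{:48c}: a $\theta\notin\Ab$ can perfectly well carry a close pair with an endpoint in $\2_{a_\7-1/2}\setminus\2_{a_\7-1}$, which would then contribute to the enlarged sum. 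To make the enlargement legitimate you must enlarge the radius in the definition of $\Aa,\A,\Ab$ (i.e.\ in \eqref{:42}) by the same amount, so that all six items, and the inclusion $\mathsf{A}^{\mathbf a}(\7,\6,\8)\subset\A$ used in \lref{l:43}, are restated consistently; the appeal to the freedom in $\mathbf a\in\mathbf A$ does not help, since that freedom governs the covering of $\mathsf A$, not the interplay between $\5$ and the mollifier. Finally, the bound $n(n+1)$ in \eqref{:48a} is tight only if the sum defining $\5$ is read over unordered pairs; over ordered pairs the crude count gives $2n(n-1)$, which exceeds $n(n+1)$ for $n\ge 4$. None of this changes the eventual conclusion (any polynomial-in-$n$ bound and any consistent enlargement of the windows by $O(\e)$ suffices for \lref{l:45}), but the statement as written requires the conventions to be fixed carefully, and your write-up leaves exactly that bookkeeping loose.
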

\begin{proof}
\eqref{:47} follows from \cite[Lemma 2.4 (1)]{o.dfa}. 
Other statements are clear from  a direct calculation. 
\end{proof}

Permutation invariant functions $\map{\sigma^{\6 }_{\7 }}{\yaQ }{\R^+}$ are called 
density functions of $\mu $ if, for all bounded $\sigma [\pi_r]$-measurable functions $ \ff $, 
\begin{align}\label{:3.5}&
\int_{\ya } \ff \, d\mu  = 
\frac{1}{\6 !} \int_{\yaQ }f^{\6 }_{\7 }\sigma^{\6 }_{\7 }dx 
.							\end{align}
Here $\map{f^{\6 }_{\7 }}{\yaQ }{\R}$ is the permutation 
invariant function such that $ f^{\6 }_{\7 }(\mathbf{x}(\theta ))=\ff (\theta ) $ 
for $\theta\in\ya $, where $\mathbf{x}$ is an $\yaQ $-coordinate. 
We recall relations between a correlation function and a density function 
(\cite{so-}): 
\begin{align}\label{:4t}&  %% (x_1,\ldots,x_n) 
\rho _{\6} = \sum_{k=0}^{\infty} \frac{1}{k !}
\int _{I_r^{k }} \sigma ^{\6 + k}_{\7} (x_1,\ldots , x_{\6+k}) 
dx_{\6+1}\cdots dx_{\6+k}
\\ \label{:4u} &
\sigma ^{\6 }_{\7} = \sum_{k=0}^{\infty} 
\frac{(-1)^k}{k !}
\int _{I_r^{k }} \rho _{\6 + k} (x_1,\ldots , x_{\6+k}) 
dx_{\6+1}\cdots dx_{\6+k}
\end{align}
The first summand in the right hand side of \eqref{:4t} is taken to be 
$ \sigma ^{\6 }_{\7 } $. It is clear that 
\begin{align}\label{:4v}&
0 \le \sigma ^{\6 }_{\7 } ( x_1,\ldots,x_n ) 
\le \rho _{\6 } ( x_1,\ldots,x_n )
\end{align}

\begin{lem}   \label{l:42}
There exists a constant $ \Ct \label{;4x} $ depending on $ \7, \6 $ such that 
\begin{align}\label{:4x}&
\sigma^{\6 }_{\7 } (x_1,\ldots,x_n) \le 
\cref{;4x} 
\min _{ i \not= j} |x_i - x_j |
\quad \text{ for all } (x_1,\ldots,x_n) \in \yaQ 
\end{align}
\end{lem}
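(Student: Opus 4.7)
The plan is to exploit the determinantal formula $\rho_{\6+k}(x_1,\ldots,x_{\6+k})=\det(\mathsf{K}(x_a,x_b))_{a,b=1}^{\6+k}$ together with the inclusion--exclusion identity \eqref{:4u} to pull a factor $\min_{i\ne j}|x_i-x_j|$ out of $\sigma^{\6}_{\7}$. The whole point is that the determinant vanishes identically when two rows coincide, so local Lipschitz continuity of $\mathsf{K}$ ought to force a linear vanishing rate.

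First I will fix any pair $(i,j)$ with $i\ne j$ in $\{1,\ldots,\6\}$ and apply the row operation $R_j\leftarrow R_j-R_i$, which preserves the determinant. The $j$-th row of the resulting matrix consists of the differences $\mathsf{K}(x_j,x_b)-\mathsf{K}(x_i,x_b)$. By the local Lipschitz continuity of $\mathsf{K}$ assumed in \tref{l:20}, each such entry is bounded by $L_{\7}|x_i-x_j|$ for a suitable constant $L_{\7}$, while the remaining rows have entries bounded by $M_{\7}:=\sup_{\1\times \1}|\mathsf{K}|$, which is finite because $\mathsf{K}$ is continuous on the compact set $\1\times \1$ (using also $|\mathsf{K}(x,y)|\le\sqrt{\mathsf{K}(x,x)\mathsf{K}(y,y)}$ from non-negative definiteness). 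Hadamard's inequality applied to the modified $(\6+k)\ts(\6+k)$ matrix then yields
\begin{align*}
|\rho_{\6+k}(x_1,\ldots,x_{\6+k})|\le (\6+k)^{(\6+k)/2}\,L_{\7}\,M_{\7}^{\6+k-1}\,|x_i-x_j|
\end{align*}
uniformly for $x_{\6+1},\ldots,x_{\6+k}\in \1$.

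Next I will substitute this bound into \eqref{:4u}, integrate each term over $\1^{k}$ (of Lebesgue measure $v_{\7}^{k}$), and sum the series in $k$. This gives
\begin{align*}
0\le \sigma^{\6}_{\7}(x_1,\ldots,x_{\6})\le L_{\7}\,|x_i-x_j|\sum_{k=0}^{\infty}\frac{(\6+k)^{(\6+k)/2}M_{\7}^{\6+k-1}v_{\7}^{k}}{k!},
\end{align*}
and taking the infimum over $i\ne j$ produces the claim, with $\cref{;4x}$ equal to $L_{\7}$ times the above series.

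The main obstacle is therefore verifying convergence of the series, since Hadamard has introduced the nearly factorial factor $(\6+k)^{(\6+k)/2}$. By Stirling, $(\6+k)^{(\6+k)/2}/k!\sim k^{\6/2}(e/\sqrt{k})^{k}$, which decays super-geometrically and so absorbs the factor $(M_{\7}v_{\7})^{k}$ for any fixed $\7$ and $\6$. Hence the series is finite, yielding the constant $\cref{;4x}$ depending on $\7$ and $\6$ as required. If the Hadamard estimate ever proved too crude (it is not here), an alternative is to write $\sigma^{\6}_{\7}$ as a Janossy density $\det(I-K_{\7})\cdot\det(J_{\7}(x_i,x_j))_{i,j=1}^{\6}$ with $J_{\7}=K_{\7}(I-K_{\7})^{-1}$, and apply the row-subtraction argument directly to the $\6\ts \6$ determinant; this would bypass convergence issues entirely, at the cost of requiring $\|K_{\7}\|<1$ or a projection correction.
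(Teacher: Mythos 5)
Your proof is correct, but it takes a noticeably longer route than the paper. You expand $\sigma^{\6}_{\7}$ via the full inclusion--exclusion identity \eqref{:4u}, obtain a uniform row-subtraction/Hadamard bound $|\rho_{\6+k}|\le(\6+k)^{(\6+k)/2}M_{\7}^{\6+k-1}L_{\7}|x_i-x_j|$ for every $k$, and then have to verify that the resulting majorant series $\sum_k (\6+k)^{(\6+k)/2}M_{\7}^{\6+k-1}v_{\7}^{k}/k!$ converges, which it does super-geometrically by Stirling. All of this is sound. The paper, however, sidesteps the series entirely by invoking \eqref{:4v}, the elementary pointwise domination $0\le\sigma^{\6}_{\7}\le\rho_{\6}$. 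It then only needs the vanishing/Lipschitz property of the single $\6\ts\6$ determinant $\rho_{\6}$ on the compact set $\yaQ$: since $\mathsf{K}$ is locally Lipschitz, $\rho_{\6}$ is Lipschitz there; since the determinant has equal rows when $x_i=x_j$, $\rho_{\6}$ vanishes on the diagonal; a Lipschitz function vanishing on $\{x_i=x_j\}$ is bounded by a constant times $|x_i-x_j|$, and taking the minimum over $i\ne j$ finishes. Your row-subtraction argument is essentially what justifies that Lipschitz-with-zero-set bound, so the core estimate is shared; the difference is that the paper applies it once to $\rho_{\6}$ and lets \eqref{:4v} do the rest, whereas you apply it to the whole family $\{\rho_{\6+k}\}$ and pay a (harmless but avoidable) convergence tax. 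If you noticed \eqref{:4v}, your argument would collapse to the paper's.
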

\begin{proof}
By \eqref{:23} and the kernel $ \mathsf{K}  $ is locally Lipschitz continuous, 
we see $ \rho _{\6} $ is bounded  and Lipschitz continuous on $ \yaQ $. 
In addition, by using \eqref{:23} we see $ \rho _{\6} = 0 $ if 
$ x_i=x_j $ for some $ i \not= j $.  
Hence by using \eqref{:23} again 
there exists a constant $ \Ct \label{;4y} $ depending on $ \6 , \7  $ such that 
\begin{align}\label{:4y}&
\rho _{\6} (x_1,\ldots,x_n) \le \cref{;4y} \min _{ i \not= j} |x_i - x_j | 
\quad \text{ for all } (x_1,\ldots,x_n) \in \yaQ 
.\end{align}
\eqref{:4x} follows from this and \eqref{:4v} immediately. 
\end{proof}

\begin{lem}   \label{l:45}
\eqref{:43} holds true. 
\end{lem}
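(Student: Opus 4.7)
The plan is to take $\mathfrak{g}_\e$ from \lref{l:44} as a test function for the capacity of the open set $\A$. Since $\mathfrak{g}_\e \in \di$ and $\mathfrak{g}_\e \ge 1$ on $\A$ by \eqref{:47}--\eqref{:48b}, we get
\begin{align*}
\0 (\A) \;\le\; \mathcal{E}(\mathfrak{g}_\e, \mathfrak{g}_\e) + \|\mathfrak{g}_\e\|_{\Lm}^2 ,
\end{align*}
and it suffices to show that both summands on the right vanish as $\e \downarrow 0$, which immediately gives \eqref{:43}.

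For the Dirichlet-energy term, \eqref{:49b} confines the integrand to $\Aee$ and \eqref{:49a} bounds it there by a constant multiple of $\bigl((\log \e)^2 (\min|x_i-x_j|)^2\bigr)^{-1}$, the minimum being over admissible pairs. I rewrite the integral against $\mu$ via the density formula \eqref{:3.5} at scale $a_r + 1/m$, substitute the linear density bound $\sigma^n_{a_r+1/m} \le c\, \min_{i\ne j}|x_i-x_j|$ from \lref{l:42}, split the symmetric minimum via $(\min d_{ij})^{-2} \le \sum_{i<j} d_{ij}^{-2}$ restricted to admissible pairs, and integrate out the $n-2$ spectator coordinates over the bounded cube. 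This reduces the problem to the elementary estimate
\begin{align*}
\int\!\!\int \frac{\mathbf{1}_{\{\e/2 \le |x-y| \le 1+\e\}}}{|x-y|}\, dx\,dy \;=\; O(|\log \e|)\quad(d=1), \qquad O(1)\quad(d \ge 2),
\end{align*}
taken over a bounded region. Combined with the $(\log \e)^{-2}$ prefactor, this yields $\mathcal{E}(\mathfrak{g}_\e,\mathfrak{g}_\e) = O(|\log \e|^{-1}) \to 0$.

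For the $\Lm$ term I apply dominated convergence. The uniform estimate \eqref{:48a} supplies the integrable dominant $n(n+1)$, and for every $\theta \in \Theta \setminus \mathsf{A}$ the finitely many points of $\theta$ in $I_{a_r+1/m}$ have a strictly positive minimum pair separation $\delta(\theta)$, so once $\e < \delta(\theta)$ every configuration in the mollifier's $\e/4$-window retains separated pairs; each term $h_\e(x-y) = 2\log|x-y|/\log\e$ then tends to $0$, and hence $\mathfrak{g}_\e(\theta) \to 0$. Local integrability of $\rho_2$ gives $\mu(\mathsf{A}) = 0$, so the convergence holds $\mu$-a.e., and dominated convergence yields $\|\mathfrak{g}_\e\|_{\Lm}^2 \to 0$.

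The main obstacle is the Dirichlet-energy estimate: each pairwise derivative of $h_\e$ decays only like $|x-y|^{-1}$, giving a borderline non-integrable $|x-y|^{-2}$ singularity on the collision diagonal. The locally Lipschitz hypothesis on $\mathsf{K}$ intervenes precisely here, through \lref{l:42}: together with the vanishing of $\det(\mathsf{K}(x_i,x_j))$ on the diagonal it supplies the extra factor $|x-y|$ that reduces the singularity to an integrable $|x-y|^{-1}$ with at worst a logarithmic divergence, which is then absorbed by the $(\log\e)^{-2}$ prefactor produced by the logarithmic cutoff. In the Hölder regime of \tref{l:26} only a weaker vanishing is available and this cancellation fails, explaining why collision can then occur.
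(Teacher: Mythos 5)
Your proof follows the same overall scheme as the paper's: take $\mathfrak{g}_\e$ from \lref{l:44} as a test function, bound $\0(\A) \le \mathcal{E}(\mathfrak{g}_\e,\mathfrak{g}_\e) + (\mathfrak{g}_\e,\mathfrak{g}_\e)_{\Lm}$, and show both terms vanish. For the energy term you arrive at exactly the paper's estimate $\mathbf{I}_\e = O(1/|\log\e|)$; the paper leaves this as ``by using \eqref{:49a} and \lref{l:42} $\ldots$ it is not difficult to see,'' and your write-up (the pairwise split, the linear density bound from \lref{l:42} cancelling one power of $\min|x_i-x_j|$, the resulting $\int\!\!\int_{\e/2\le|x-y|\le 1+\e}|x-y|^{-1}\,dx\,dy = O(|\log\e|)$ against the $(\log\e)^{-2}$ prefactor) is precisely the computation being alluded to. Your closing remark on why local Lipschitz continuity is exactly what makes the cancellation work is a correct reading of the role of \lref{l:42}.

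For the $\Lm$ term you depart from the printed argument, and to your advantage. The paper writes $(\mathfrak{g}_\e,\mathfrak{g}_\e)_{\Lm} \le n^2(n+1)^2\,\mu(\Ab) \to 0$, but $\Ab = \mathsf{A}_{1+\e}^{\mathbf{a}}(\7,\6,\8)$ decreases as $\e\downarrow 0$ to the set of configurations in $\zb$ having some pair within distance $1$ inside $\2_{a_\7 - 1}$, which is generically not $\mu$-null, so that inequality alone does not close the estimate. Your dominated-convergence argument — the uniform dominant $\6(\6+1)$ from \eqref{:48a}, pointwise convergence $\mathfrak{g}_\e(\theta)\to 0$ for every $\theta\notin\mathsf{A}$ because the finitely many particles in the relevant bounded box have a positive minimum separation so each $\4$-term is $O(1/|\log\e|)$, and $\mu(\mathsf{A})=0$ from local integrability of $\rho_2$ — is the correct and more robust route to $\|\mathfrak{g}_\e\|_{\Lm}^2\to 0$, and is the natural repair of the paper's step. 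One cosmetic remark: since the mollifier $\mathfrak{J}_{a_\7+\frac1\8,\,\e/4}$ shifts points by at most $\e/8$, you should take $\e$ small relative to $\delta(\theta)$ (say $\e < \delta(\theta)/2$) before invoking the separation bound on the mollified configurations, which you essentially do.
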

\begin{proof}
By the definition of the capacity, $ \mathfrak{g}_{\e  } \in \di $, 
\eqref{:47} and \eqref{:48b} we obtain 
\begin{align}\label{:4a}&
\0 (\A ) \le \mathcal{E} (\mathfrak{g}_{\e  } , \mathfrak{g}_{\e  } ) + (\mathfrak{g}_{\e  } ,\mathfrak{g}_{\e  } )_{\Lm } 
					\end{align} 
So we will estimate the right hand side. We now see by \eqref{:49b}
\begin{align}
\label{:4b} 
\mathcal{E}(\mathfrak{g}_{\e  } , \mathfrak{g}_{\e  } ) & 
= \int _{ \Aee  } \mathbb{D}[\mathfrak{g}_{\e  } , \mathfrak{g}_{\e  } ] d \mu 
\\& \notag 
= \frac{1}{n !} \int _{ B _{\e }} 
\{ \frac{1}{2}\sum_{i=1} ^{\6 }
\frac{\partial g _{\e }^{\6}}{\partial x_{i}}
\frac{\partial g _{\e }^{\6}}{\partial x_{i}} 
\} \sigma _{\3}^{\6}  dx_1 \cdots dx_n 
\\ \notag  & 
= : \mathbf{I}_{\e } 
.\end{align}
Here $ g _{\e }^{\6} $ is defined by \eqref{:32} for $ \mathfrak{g}_{\e  } $, 
and  
$ B _{\e }= \varpi _{\3} ^{-1} ( \pi _{\3} (\Aee ) ) $, 
where $ \map{\varpi }{ I_{\3}^{\6} }{ \Theta  }$ is the map 
such that $ \varpi ((x_1,\ldots,x_n)) = \sum \delta _{x_i}$. 

By using \eqref{:49a} and \lref{l:42} for $ \3 $ 
it is not difficult to see there exists a constant 
$  \Ct \label{;44} $ independent of $ \e $ satisfying the following: 
\begin{align*}&
\mathbf{I}_{\e } \le \frac{ \cref{;44} }{|\log \e |}
.\end{align*}
This implies 
$ \lim_{\e  \to 0}  \mathcal{E}(\mathfrak{g}_{\e  } , \mathfrak{g}_{\e  } ) = 0 $.  
By \eqref{:48a} and \eqref{:48c} we have 
\begin{align}\notag &
(\mathfrak{g}_{\e  } ,\mathfrak{g}_{\e  } )_{\Lm } = \int _{\Ab } \mathfrak{g}_{\e  } ^2 d \mu \le n^2(n+1)^2 \, \mu (\Ab ) \to 0 \quad \text{ as }\e  \downarrow 0
.							\end{align}
Combining these with \eqref{:4a} we complete the proof of \lref{l:45}. 
\end{proof}

\noindent
{\em Proof of \tref{l:20}}.  
\tref{l:20} follows from \lref{l:43} and \lref{l:45} immediately.
\qed

\section{Proof of \pref{l:27}}\label{s:5} 

\begin{lem} \label{l:51}
Let $ \mu $ be a probability measure on $ (\Theta , \mathcal{B}(\Theta ) ) $ 
such that $ \mu (\{ \theta (\mathsf{E}) < \infty  \} ) = 1 $ and that 
density functions $ \{ \sigma ^n _{\mathsf{E} } \} $ on $ \mathsf{E} $ of 
$ \mu $ are continuous. 
Then $ (\mathcal{E}, \di ) $ is closable on $ \Lm $.  
\end{lem}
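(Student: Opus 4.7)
The plan is to decompose $L^2(\Theta,\mu)$ by total particle number and reduce closability of $(\mathcal{E},\di)$ to that of weighted Dirichlet forms on Euclidean spaces $\mathsf{E}^n$ with continuous weights $\sigma^n_\mathsf{E}$.

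Since $\mu$ is concentrated on $\{\theta : \theta(\mathsf{E})<\infty\}$, I would write $\Theta=\bigsqcup_{n=0}^\infty \Theta^n$ with $\Theta^n=\{\theta:\theta(\mathsf{E})=n\}$, obtaining the orthogonal decomposition $L^2(\Theta,\mu)=\bigoplus_{n=0}^\infty L^2(\Theta^n,\mu)$. Identifying $\Theta^n$ with the symmetric quotient of $\mathsf{E}^n$ and using formula \eqref{:3.5} (with $\1$ replaced by $\mathsf{E}$, which is legitimate because $\mu$ is concentrated on finite configurations), this summand is isometric to the $S_n$-invariant part of $L^2(\mathsf{E}^n,\sigma^n_\mathsf{E}\,dx/n!)$. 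Evaluating $\mathcal{E}$ in these coordinates via the permutation-invariant representative $f^n$ of $\ff\in\di$ yields the splitting $\mathcal{E}(\ff,\ff)=\sum_n \mathcal{E}_n(f^n,f^n)$ with
\begin{align*}
\mathcal{E}_n(f,f) = \frac{1}{2\,n!}\int_{\mathsf{E}^n} \sum_{i=1}^n |\nabla_{x_i} f|^2\,\sigma^n_\mathsf{E}(x)\,dx.
\end{align*}

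For each $n$ I would invoke a Hamza--R\"ockner type criterion for closability of weighted Dirichlet forms: $\int|\nabla u|^2\sigma\,dx$ is closable on $L^2(\sigma\,dx)$ once $1/\sigma$ is locally integrable on the open set $\{\sigma>0\}$ and $\sigma$ vanishes a.e.\ off this set. Continuity of $\sigma^n_\mathsf{E}$ makes both conditions automatic: $\{\sigma^n_\mathsf{E}>0\}$ is open, $1/\sigma^n_\mathsf{E}$ is continuous and hence locally bounded there, and $\sigma^n_\mathsf{E}\equiv 0$ on the complement by definition. Each $(\mathcal{E}_n,\di_n)$ is therefore closable on its fiber, where $\di_n$ is the restriction of $\di$ to $\Theta^n$.

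Finally, to recover closability of the global form from the fiberwise ones, consider a null $\mathcal{E}$-Cauchy sequence $(\ff_k)\subset\di$. The restrictions $(f_k^n)$ are $L^2$-null and $\mathcal{E}_n$-Cauchy for every $n$, hence $\mathcal{E}_n(f_k^n,f_k^n)\to 0$ by fiberwise closability. A tail-estimate argument, using the $\mathcal{E}$-Cauchy property to bound $\sum_{n>N}\mathcal{E}_n(f_k^n,f_k^n)$ uniformly in $k$ by $2\sum_{n>N}\mathcal{E}_n(f_K^n,f_K^n)$ plus an $\varepsilon$-error, then gives $\mathcal{E}(\ff_k,\ff_k)\to 0$. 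The main obstacle is this last step---the uniform-in-$k$ control of the tail---rather than verifying the Hamza--R\"ockner criterion itself, which is essentially automatic from the continuity of the density functions.
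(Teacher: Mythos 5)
Your proof is correct and follows essentially the same route as the paper: both decompose $L^2(\Theta,\mu)$ by total particle number $n$, use continuity of the densities $\sigma^n_{\mathsf{E}}$ to get closability of each fiberwise weighted form on $\mathsf{E}^n$, and then aggregate; the paper simply phrases the aggregation as an appeal to the standard fact that an increasing limit of closable symmetric forms is closable (introducing the partial sums $\mathcal{E}^n=\sum_{k\le n}\int_{\Theta^k}\mathbb{D}\,d\mu$), while you prove that fact inline. The step you flag as the ``main obstacle'' is in fact routine and your sketch already contains it: for an $\mathcal{E}$-Cauchy, $L^2$-null sequence $(\ff_k)$ one has $\sum_{n>N}\mathcal{E}_n(\ff_k,\ff_k)\le 2\sum_{n>N}\mathcal{E}_n(\ff_k-\ff_K,\ff_k-\ff_K)+2\sum_{n>N}\mathcal{E}_n(\ff_K,\ff_K)$, the first sum is small by Cauchyness once $k,K$ are large, the second by choosing $N$ large for the fixed $K$ (since $\mathcal{E}(\ff_K,\ff_K)<\infty$), and the finitely many fibers $n\le N$ are handled by fiberwise closability.
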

\begin{proof}
Let $ \Theta ^n = \{ \theta \in \Theta\, ;\, \theta (\mathsf{E} ) = n \}  $ 
and set 
$$ \mathcal{E}^n (\mathfrak{f}, \mathsf{g} ) = 
\sum _{k=1}^n \int _{\Theta ^k } \mathbb{D} [\mathfrak{f},\mathfrak{g}] d\mu .$$
By assumption $ \sum_{n=0}^{\infty } \mu (\Theta ^n) = 1$, from which we deduce 
$ (\mathcal{E}, \di ) $ is the increasing limit of $ \{ (\mathcal{E}^n, \di ) \} $. 
Since density functions are continuous, each $ (\mathcal{E}^n, \di ) $ 
is closable on $ \Lm $. So its increasing limit $ (\mathcal{E}, \di ) $ is also closable on $ \Lm $.   
\end{proof}

\begin{lem} \label{l:52}
Let $ \mu $ be a determinantal random point field on $ \mathsf{E}  $ 
with continuous kernel $ \mathsf{K}  $. 
Assume $ \mathsf{K}  $ is of trace class. 
Then their density functions $ \sigma ^n $ on $ \mathsf{E}  $ are continuous. 
\end{lem}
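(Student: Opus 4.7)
Since $K$ is trace class, the inequality $\mu(\theta(\mathsf{E})=\infty)\le\sum_{k\ge1}\mu(\theta(\mathsf{E})\ge k)$ combined with $\mathrm{Tr}(K)=\int_{\mathsf{E}}\mathsf{K}(x,x)\,dx<\infty$ and $\int_{\mathsf{E}}\rho_1\,dx=\mathrm{Tr}(K)$ (together with similar bounds for higher factorial moments) shows that $\mu(\theta(\mathsf{E})<\infty)=1$. Hence the density function $\sigma^{n}$ on the whole space $\mathsf{E}$ is well defined, and one expects the analogue of formula \eqref{:4u} to hold globally, namely
\begin{align*}
\sigma^{n}(x_1,\ldots,x_n)
=\sum_{k=0}^{\infty}\frac{(-1)^k}{k!}
\int_{\mathsf{E}^{k}}\rho_{n+k}(x_1,\ldots,x_{n+k})\,dx_{n+1}\cdots dx_{n+k}.
\end{align*}
The first step will be to verify this extension. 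One way is to apply \eqref{:4u} with the bounded window $I_r$ replaced by $I_r\cap\mathsf{E}$ and then pass to the limit $r\to\infty$; a dominated convergence argument based on the bound derived below takes care of the interchange.

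Next I would plug in the determinantal formula \eqref{:23}, giving a sum of $(n+k)$-dimensional determinants of the kernel $\mathsf{K}$. The crucial estimate is Hadamard's inequality for positive semidefinite matrices, which yields
\begin{align*}
0\le\det\bigl(\mathsf{K}(x_i,x_j)\bigr)_{1\le i,j\le n+k}
\le\prod_{i=1}^{n+k}\mathsf{K}(x_i,x_i).
\end{align*}
Integrating the last $k$ coordinates gives the bound
\begin{align*}
\int_{\mathsf{E}^{k}}\det\bigl(\mathsf{K}(x_i,x_j)\bigr)_{1\le i,j\le n+k}\,dx_{n+1}\cdots dx_{n+k}
\le\Bigl(\prod_{i=1}^{n}\mathsf{K}(x_i,x_i)\Bigr)\cdot(\mathrm{Tr}\,K)^{k},
\end{align*}
which shows that the series defining $\sigma^{n}$ converges absolutely, uniformly on each compact set in $\mathsf{E}^{n}$ (here continuity of $\mathsf{K}$ makes $\mathsf{K}(x,x)$ bounded on compacts).

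Finally, each individual term is continuous in $(x_1,\ldots,x_n)$: continuity of $\mathsf{K}$ makes the integrand continuous jointly in all variables, and the same Hadamard bound $\prod_{i=1}^{n+k}\mathsf{K}(x_i,x_i)$, being integrable in the inner variables (trace class) and locally bounded in the outer ones, supplies a dominating function to apply Lebesgue's dominated convergence theorem. Being the uniform-on-compacts limit of continuous functions, $\sigma^{n}$ is continuous, as required.

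The main obstacle is not any single step but rather the justification of the global version of \eqref{:4u}: one must be sure that the inversion of correlation functions to density functions remains valid when the reference window is the entire (possibly unbounded) set $\mathsf{E}$. The trace class hypothesis is exactly what is needed so that the tail terms in the inclusion--exclusion formula \eqref{:4u} vanish as $r\to\infty$, and this is where the bound $\mathrm{Tr}(K)<\infty$ does the essential work. Once this is in place, the rest is the Hadamard-plus-dominated-convergence routine described above.
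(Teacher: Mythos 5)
Your proposal is correct, but it follows a genuinely different route from the paper. The paper's proof invokes Soshnikov's algebraic identity
\begin{align*}
\sigma^n(x_1,\ldots,x_n)=\det(\text{Id}-K)\cdot\det\bigl(L(x_i,x_j)\bigr)_{1\le i,j\le n},
\qquad L = K(\text{Id}-K)^{-1},
\end{align*}
together with the Mercer eigenexpansions \eqref{:51} and \eqref{:54}. Continuity of $\sigma^n$ then follows from the uniform convergence of the $L$-expansion, but only after restricting to $K<1$ (so that $\text{Id}-K$ is invertible) and invoking a separate device from Soshnikov's survey to dispose of the general case. You instead work directly with the inclusion--exclusion series \eqref{:4u} applied with the window taken to be all of $\mathsf{E}$, dominate the $k$-th term by the Hadamard bound $0\le\det(\mathsf{K}(x_i,x_j))\le\prod_i\mathsf{K}(x_i,x_i)$, integrate the tail to get the majorant $(\prod_{i\le n}\mathsf{K}(x_i,x_i))\,(\mathrm{Tr}\,K)^k/k!$, and sum to $e^{\mathrm{Tr}\,K}$. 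This gives uniform-on-compacts convergence of a series of continuous terms, hence continuity of $\sigma^n$. Your route is in fact much closer in spirit to the paper's own \lref{l:61}, which proves the local analogue by a row-norm determinant bound and Stirling; it is more elementary, avoids the $K<1$ restriction entirely, and does not rely on the $L$-kernel machinery. The one point you rightly flag as needing care is the validity of the whole-space version of \eqref{:4u}; since $K$ is trace class one has $\mu(\theta(\mathsf{E})<\infty)=1$ and the global Janossy densities exist, and the very same Hadamard--plus--factorial bound makes the inclusion--exclusion series absolutely convergent, which is exactly what the standard inversion argument requires. The paper's approach buys a clean closed-form identity for $\sigma^n$; yours buys a shorter, self-contained argument free of the invertibility caveat.
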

\begin{proof}
For the sake of simplicity we only prove the case $ K < 1 $, 
where $ K $ is the operator generated by the integral kernel $ \mathsf{K}  $. 
The general case is proved similarly by using a device in \cite[935 p.]{so-}.

Let $ \lambda _i $ denote the $ i $-th eigenvalue of $ K  $ and $ \varphi _i $ 
its normalized eigenfunction.  Then since $ K  $ is of trace class 
we have 
\begin{align} \label{:51}&
\mathsf{K}  (x,y) = \sum _{i=1}^{\infty} 
\lambda _i \varphi _i (x) \overline{\varphi _i (y)} 
.\end{align}

It is known that (see \cite[934 p.]{so-})
\begin{align}\label{:53}&
\sigma ^n (x_1,\ldots,x_n) = 
\det (\text{Id} - K  ) \cdot \det (L (x_i, x_j))_{1\le i,j \le n}
,\end{align}
where $ \det (\text{Id} - K  ) = \prod_{i=1}^{\infty} (1 - \lambda _i ) $ 
and 
\begin{align} \label{:54}&
L (x,y) = \sum_{i=1}^{\infty} \frac{\lambda _i}{1- \lambda _i} 
\varphi _i (x) \overline{\varphi _i (y)}
.
\end{align}

Since $ \mathsf{K} (x,y) $ is continuous, eigenfunctions 
$ \varphi _i (x) $ are also continuous. 
It is well known that the right hand side of \eqref{:51} converges uniformly. 
By $ 0 \le K < 1 $ we have $ 0 \le \lambda _i \le \lambda _1 <1 $.  
Collecting these implies the right hand side of \eqref{:54} converges uniformly. 
Hence $ L(x,y) $ is continuous in $ (x,y) $. 
This combined with \eqref{:53} completes the proof. 
\end{proof}

{\em Proof of \pref{l:27}}. 
Since $ K $ is of trace class, the associated determinantal random point field 
$ \mu $ satisfies $ \mu (\{ \theta (\mathsf{E}) < \infty  \} ) = 1 $. 
By \lref{l:52} 
we have density functions $ \sigma _{\mathsf{E} }^n $ are continuous. 
So \pref{l:27} follows from \lref{l:51}. 
\qed

\bigskip 

We now turn to the proof of \tref{l:26}. 
So as in the statement in \tref{l:26} let 
$ \mathsf{E} = \R  $ and 
$ \mathsf{K}(x,y) = \mathsf{m}(x) \mathsf{k}(x-y) \mathsf{m}(y)   $, where  
$ \map{\mathsf{k}}{\R }{\R } $ is a non-negative, continuous {\em even} function that is convex in $ [0, \infty) $ such that $ \mathsf{k} (0) \le 1 $, 
and $ \map{\mathsf{m}}{\R }{\R }   $ is nonnegative continuous and $ \int _{\R } \mathsf{m}(t) dt < \infty  $ and $ \mathsf{m} (x) \le 1  $ for all $ x  $ and 
$ 0 < \mathsf{m} (x) $ for some $ x $.  
We assume 
 $ \mathsf{k}  $ satisfies \eqref{:27b}.

\begin{lem} \label{l:53}
There exists an interval $ I $ in $ \mathsf{E}  $ such that 
\begin{align}\label{:55} &
\sigma _{I }^2 (x,x+t) \ge \cref{;51}  t ^{\alpha } 
\quad \text{ for all } |t| \le 1 \text{ and } x, x+t \in I 
,\end{align}
where $ \Ct \label{;51} $ is a positive constant and $ \sigma _{I }^2  $ 
is the 2-density function of $ \mu $ on $ I $. 
\end{lem}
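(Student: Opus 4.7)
The plan is to extract $\rho_2(x,y)$ as a prefactor from $\sigma_I^2(x,y)$ via a Schur-complement identity, thereby reducing \eqref{:55} to the analogous bound on $\rho_2$ together with a lower bound on a Fredholm determinant. First, pick a closed interval $I$ centered at some $x_0$ with $\mathsf{m}(x_0) > 0$; by continuity, $\mathsf{m}(x) \ge c_0 > 0$ on a neighborhood, and we may shrink $I$ further so that $|I| \le 1/2$. Since $\mathsf{K}$ is real and symmetric,
\begin{align*}
\rho _2 (x, x+t) = \mathsf{m}(x)^2 \mathsf{m}(x+t)^2 \bigl( \mathsf{k}(0) - \mathsf{k}(t) \bigr) \bigl( \mathsf{k}(0) + \mathsf{k}(t) \bigr).
\end{align*}
Using \eqref{:27b} and evenness of $\mathsf{k}$, the first bracket is bounded below by $\cref{;21} |t|^\alpha$ for $|t|\le 1$, while the second is bounded below by $\mathsf{k}(0)$, which is strictly positive because otherwise \eqref{:27b} would force $\mathsf{k}(t) < 0$ for small $t>0$. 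Hence $\rho_2 (x, x+t) \ge c_1 |t|^\alpha$ for $x, x+t \in I$ and $|t| \le 1$, for some explicit constant $c_1 > 0$.

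Next, for $(x,y)$ with $\rho_2(x,y)>0$, apply the block-determinant formula to the $(n+2)\times(n+2)$ matrix $(\mathsf{K}(w_i, w_j))$ with $w = (x, y, z_1, \ldots, z_n)$:
\begin{align*}
\rho _{n+2}(x, y, z_1, \ldots, z_n) = \rho _2(x, y) \cdot \det \bigl( \tilde{\mathsf{K}}(z_i, z_j) \bigr)_{1 \le i, j \le n},
\end{align*}
where $\tilde{\mathsf{K}}(z, z') = \mathsf{K}(z, z') - (\mathsf{K}(z, x), \mathsf{K}(z, y)) A^{-1} (\mathsf{K}(x, z'), \mathsf{K}(y, z'))^T$ is the Schur complement of the $2 \times 2$ block $A$ at $(x,y)$. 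Since $\mathsf{K}$ is nonnegative definite both pointwise and as an operator, the same is true of $\tilde{\mathsf{K}}$, and moreover $\tilde{\mathsf{K}} \le \mathsf{K}$ as operators (by computing $\langle f, \tilde K f \rangle$ from the Schur complement of the positive Gram matrix of $f, \delta_x, \delta_y$ in the form $\langle \cdot, K \cdot \rangle$). Substituting into the series \eqref{:4u} and recognising the Fredholm expansion produces the key identity
\begin{align*}
\sigma _I^2 (x, y) = \rho _2 (x, y) \cdot \det (I - \tilde K _I ),
\end{align*}
where $\tilde K_I$ denotes the restriction of $\tilde{\mathsf{K}}$ to $L^2(I)$, which is trace class with spectrum in $[0,1]$.

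Finally, invoke the elementary inequality $\det(I - \tilde K_I) \ge 1 - \mathrm{Tr}(\tilde K_I)$ (which holds whenever the eigenvalues lie in $[0,1]$), and estimate $\mathrm{Tr}(\tilde K_I) \le \mathrm{Tr}(K_I) = \int_I \mathsf{K}(z, z)\, dz \le \mathsf{k}(0)|I| \le 1/2$ using $\tilde{\mathsf{K}}(z,z) \le \mathsf{K}(z,z)$ pointwise. Thus $\det(I - \tilde K_I) \ge 1/2$, and combined with the lower bound on $\rho_2$ this gives $\sigma_I^2(x, x+t) \ge (c_1/2) |t|^\alpha$, establishing \eqref{:55} (the case $t = 0$ is trivial since both sides vanish). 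The main obstacle is the passage from the pointwise block-determinant identity to the operator-level Fredholm identity: absolute convergence of \eqref{:4u} follows from Hadamard's inequality $\det(\tilde{\mathsf{K}}(z_i, z_j)) \le \prod \tilde{\mathsf{K}}(z_i, z_i) \le 1$, but verifying the operator-level inequality $\tilde{\mathsf{K}} \le \mathsf{K}$ and the trace-class nature of $\tilde K_I$ needs some care to handle possible vanishing of $\rho_2$ near the diagonal.
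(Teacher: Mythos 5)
Your proof is correct, but it takes a genuinely different route from the paper's. The paper truncates the alternating series \eqref{:4u} at the second term to get the Bonferroni/Markov-type lower bound $ \sigma _{I}^2(x,x+t) \ge \rho _2(x,x+t) - \int _I \rho _3(x,x+t,z)\,dz $, then establishes $ \rho _2(x,x+t) \ge c\, t^{\alpha } $ and $ \rho _3(x,x+t,z) \le c'\, t^{\alpha } $ directly from \eqref{:23} and \eqref{:27b}, and finally shrinks $ I $ until $ c - c'|I| > 0 $. You instead factor $ \rho _2 $ out of every $ \rho _{k+2} $ via the Schur-complement determinant identity, sum the whole Fredholm series \eqref{:4u} exactly to get $ \sigma _I^2(x,y) = \rho _2(x,y)\,\det(\mathrm{Id}-\tilde{K}_I) $, and then bound the remaining determinant below by $ 1 - \mathrm{Tr}(\tilde{K}_I) \ge 1 - |I| \ge 1/2 $ using $ 0 \le \tilde{K}_I \le K_I $ and $ \mathsf{K}(z,z)\le 1 $. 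What your route buys: the $ \rho _3 $ estimate comes for free (indeed $ \rho _3(x,x+t,z) = \rho _2(x,x+t)\tilde{\mathsf{K}}(z,z)\le\rho_2(x,x+t) $), you never need any modulus of continuity for $ \mathsf{m} $, and the smallness condition on $ I $ is the clean and explicit $ |I|\le 1/2 $ rather than a constant-chasing condition. What it costs: you must justify the operator-level statements ($ \tilde K \ge 0 $, $ \tilde K \le K $, trace class) — for which the cleanest device is to write $ \mathsf{K}(z,z')=\langle\Phi(z),\Phi(z')\rangle _{\ell ^2} $ from the eigenexpansion \eqref{:51} and invoke Schur complements of the resulting Gram matrices, sidestepping the $ \delta _x\notin L^2 $ issue you flag — and you must note that $ A $ is invertible for all $ t\ne 0 $ (automatic from your $ \rho _2 $ lower bound), with $ t=0 $ trivial. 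These gaps are filled by your own remarks and are routine; the argument is sound.
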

\begin{proof}
By assumption  we see $ \inf _{ x \in I} \mathsf{m}(x) > 0 $ 
for some open bounded, nonempty interval $ I $ in $ \mathsf{E}  $. 
By \eqref{:4u} we have 
\begin{align}\label{:56}&
\sigma _{I }^2 (x,x+t) \ge \rho _2 (x, x+t) - 
\int _I \rho_3 (x, x+t, z) dz 
\end{align}
By \eqref{:23} and \eqref{:27b} there exist positive constants 
$ \Ct \label{;52} $ and $ \Ct \label{;53} $ such that 
\begin{align}\label{:57}&
\cref{;52}  t ^{\alpha } \le \rho _2 (x,x+t) 
&&\quad \text{ for all } |t| \le 1 \text{ and } x, x+t \in I 
\\ &\notag 
\rho _3 (x,x+t,z ) \le \cref{;53}  t ^{\alpha } 
&&\quad \text{ for all } |t| \le 1 \text{ and } x, x+t , z \in I 
.\end{align}
Hence by taking $ I $ so small we deduce \eqref{:55} 
from \eqref{:56} and \eqref{:57}. 
\end{proof}

{\em Proof of \tref{l:26}}. 
The closability follows from \pref{l:27}. 
So it only remains to prove \eqref{:27c}. 

Let $ (\mathcal{E}^2 , \dom ^2 ) $ and $ (\mathcal{E}, \dom ) $ 
denote closures of 
$ (\mathcal{E} ^2, \di   ) $ and 
$ (\mathcal{E} , \di ) $ 
on $ \Lm $, respectively.  Then 
\begin{align}\label{:5a}&
 (\mathcal{E} ^2 , \dom ^2 ) \le (\mathcal{E} , \dom )   
\end{align}
Let $ I $ be as in \lref{l:53}. 
Let $ \{ I_r \} _{r=1,\ldots}$ be an increasing sequence of 
 open intervals in $ \mathsf{E}  $ 
such that $ I_1 = I $ and $ \cup_r I _r = \mathsf{E} $.  
Let 
\begin{align}\label{:58}&
\mathcal{E}_r^2  (\mathfrak{f},\mathfrak{g}  )= \int _{\Theta^2 } 
\sum _{x_i \in I_r} \frac{1}{2} 
\frac{\partial f (\mathbf{x} )}{\partial x_i } \cdot 
\frac{\partial g (\mathbf{x} )}{\partial x_i }
d \mu
\end{align}
Here we set $ \mathbf{x}=(x_1,\ldots )  $, $ f $  and $ \mathfrak{f}  $ 
similarly as in \eqref{:25}. Then since density functions on $ I_r $ 
are continuous, we see $ (\mathcal{E}_r^2 , \di ) $ are closable on $ \Lm $.  
So we denote its closure by $ (\mathcal{E}_r^2  , \dom _r^2 ) $.  
It is clear that $ \{ (\mathcal{E}_r^2  , \dom _r^2 ) \}  $ is increasing 
in the sense that $ \dom _r^2 \supset \dom _{r+1}^2 $ and 
$ \mathcal{E}_r^2  (\mathfrak{f}, \mathfrak{f}  ) \le 
\mathcal{E}_{r+1}^2  (\mathfrak{f}, \mathfrak{f}  ) $ for all 
$ \mathfrak{f} \in \dom _{r+1} $.  
So we denote its limit by $ (\check{\mathcal{E} }^2 , \check{\dom}^2  ) $.  
It is known (\cite[Remark \thetag{3} after Theorem 3]{o.dfa}) that 
\begin{align}\label{:59}&
(\check{\mathcal{E}}^2 , \check{\mathcal{\dom} }^2  ) \le 
(\mathcal{E}^2, \dom ^2) 
.\end{align}

By \eqref{:5a}, \eqref{:59} and the definition of 
$ \{ (\mathcal{E}_r^2  , \dom _r^2 ) \}  $  we conclude 
$ (\mathcal{E}_1^2  , \dom _1^2  ) \le (\mathcal{E} , \dom )  $, 
which implies 
\begin{align}\label{:5b}&
\0 _1^2  \le \0 
,\end{align}
where $ \0 _1^2  $ and $ \0  $ denote capacities of 
$ (\mathcal{E}_1^2  , \dom _1^2  ) $ and $ (\mathcal{E} , \dom  ) $, respectively. 
Let $ \mathsf{B}=\Theta ^2 \cap 
\{ \theta (\{ x \} ) =2 \text{ for some } x \in I \}  $. 
Then by \eqref{:27a} and \eqref{:55} together with 
 a standard argument (see \cite[Example 2.2.4]{fot} for example)
we obtain 
\begin{align}\label{:5c}&
0 < \0 _1^2  (\mathsf{B} ) .
\end{align}Since $ \mathsf{B}  \subset \mathsf{A}  $, 
we deduce $ 0 < \0 (\mathsf{A} ) $ from \eqref{:5b} and \eqref{:5c}, 
which implies \eqref{:27c}.
\qed

\section{A construction of infinite volume dynamics}\label{s:6}
In this section we prove \tref{l:28}. 
We first prove the closability of pre-Dirichlet forms in finite volume.  
\begin{lem} \label{l:61}
Let $ I _r = (-r, r) \cap \mathsf{E}  $ and $ \sigma _r^n  $ denote the 
$ n $-density function on $ I _r $.   
Then $ \sigma _r^n  $ is continuous. 
\end{lem}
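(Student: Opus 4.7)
The plan is to establish continuity of $ \sigma _{\7}^{\6} $ directly from the inversion formula \eqref{:4u} by showing that its series representation converges uniformly on $ I_r^{\6} $ to a continuous function.

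The key input is a Hadamard-type bound. Because $ K $ is non-negative definite with continuous kernel (see \eqref{:22}), the matrix $ (\mathsf{K}(y_i,y_j))_{1\le i,j \le N} $ is positive semidefinite for every choice of points $ y_1,\ldots,y_N $, so
\begin{align*}&
0 \le \rho _{N}(y_1,\ldots,y_{N}) = \det (\mathsf{K}(y_i,y_j)) \le \prod _{i=1}^{N} \mathsf{K}(y_i,y_i) \le M^N,
\end{align*}
where $ M := \sup _{y \in \overline{I_r}} \mathsf{K}(y,y) < \infty $ by continuity of $ \mathsf{K} $ and compactness of $ \overline{I_r} $. Consequently the $ k $-th summand in \eqref{:4u} is dominated, uniformly in $ (x_1,\ldots,x_{\6}) \in I_r^{\6} $, by $ M^{\6}(M|I_r|)^k / k! $, whose sum is $ M^{\6} e^{M|I_r|} < \infty $, so the series converges uniformly.

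Each partial sum of \eqref{:4u} is then continuous on $ I_r^{\6} $: the integrand $ \rho _{\6+k} $ is continuous as a determinant of the continuous kernel $ \mathsf{K} $, and continuity of the integral over $ I_r^k $ in the outer variables follows by a routine dominated-convergence argument with the same uniform bound. Since a uniform limit of continuous functions is continuous, $ \sigma _{\7}^{\6} $ is continuous on $ I_r^{\6} $, proving \lref{l:61}.

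The only step above that is not entirely formal is the Hadamard inequality for positive semidefinite matrices; this is standard linear algebra and is moreover a basic tool in the series representation theory of determinantal random point fields (cf.~\cite{so-}), so I do not anticipate any real obstacle. An alternative route would be to restrict $ \mu $ to $ I_r $ via the push-forward $ \mu \circ \pi _{\7}^{-1} $, which is determinantal with the trace class kernel $ 1_{I_r} K 1_{I_r} $, and then invoke \lref{l:52}; but the direct argument above seems shorter.
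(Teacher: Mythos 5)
Your proof is correct and follows the same overall strategy as the paper: start from the inversion formula \eqref{:4u}, bound $\rho_{n+k}$ via a Hadamard inequality, deduce uniform convergence of the series on $I_r^n$, and conclude continuity as a uniform limit of continuous functions. The one genuine difference is which Hadamard inequality you invoke. The paper bounds the determinant by the product of Euclidean row norms, obtaining $|\rho_{n+k}| \le (\sqrt{n+k}\,M')^{n+k}$ with $M' = \sup_{x,y\in I_r}|\mathsf{K}(x,y)|$; the extra $\sqrt{n+k}$ factor then forces an appeal to Stirling's formula to control the $1/k!$ in the series. You instead use the sharper Hadamard inequality for positive semidefinite matrices, $\det(\mathsf{K}(y_i,y_j)) \le \prod_i \mathsf{K}(y_i,y_i)$, which is legitimate here since $K \ge 0$, and which yields the tighter bound $\rho_{n+k}\le M^{n+k}$ with $M=\sup_y\mathsf{K}(y,y)$. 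That removes the $\sqrt{n+k}$ factor entirely, so the $k$-th term is dominated by $M^n(M|I_r|)^k/k!$ and the series is trivially summable — no Stirling estimate needed. Your version is shorter and cleaner; the paper's version, while more work, does not rely on positive semidefiniteness of the kernel matrix and so would extend verbatim to non-Hermitian kernels. Either route suffices under the standing hypotheses, and both are valid.
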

\begin{proof}
Let $ M = \sup _{x,y \in I _r} | \mathsf{K}(x,y)| $. 
Then $ M < \infty $ because $ \mathsf{K}  $ is continuous.  
Let $ \mathbf{x}_i = 
(\mathsf{K} (x_i, x_1), \mathsf{K}  (x_i, x_2),\ldots, 
\mathsf{K}  (x_i, x_n))  $ and 
$ \| \mathbf{x}_i \| $ denote its Euclidean norm. Then by \eqref{:23} we see 
\begin{align}\label{:61}&
|\rho _n|  \le  \prod _{i=1}^n \| \mathbf{x}_i \| \le \{ \sqrt{n} M  \}^n 
.\end{align}
By using Stirling's formula and \eqref{:61} 
we have for some positive constant $ \Ct \label{;61} $  
independent of $ k $ and $ M $ such that 
\begin{align}\label{:62}&
| \frac{(-1)^k}{k !}
\int _{I_r^{k }} \rho _{\6 + k} (x_1,\ldots , x_{\6+k}) 
dx_{\6+1}\cdots dx_{\6+k} |
\\\notag 
& \quad  \quad \quad \quad \quad \le 
\cref{;61}^k  k^{- k + 1/2} (n+k)^{(n+k)/2} M ^{n+k} 
.\end{align}
This implies for each $ n $ the series in the right hand side of \eqref{:4u} 
converges uniformly in $ (x_1,\ldots,x_n) $. 
So $ \sigma _r^n  $ is the limit of continuous functions in the uniform norm, which completes the proof. 
\end{proof}

\begin{lem} \label{l:62}
$ (\mathcal{E}, \dom _{\infty , r}  )  $ are closable on $ \Lm $. 
\end{lem}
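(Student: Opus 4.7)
The plan is to reduce closability on $ \Lm $ to the finite-volume closability established in \lref{l:51} by pushing forward through $ \pi _r $. Concretely, any $ \mathfrak{f} \in \dom _{\infty , r} $ is $ \sigma [\pi _r ] $-measurable and hence factors as $ \mathfrak{f} = \tilde{\mathfrak{f}} \circ \pi _r $ for some measurable $ \tilde{\mathfrak{f}} $ on the configuration space $ \Theta _r $ over $ I _r = (-r,r) \cap \mathsf{E} $. Letting $ \mu _r = \mu \circ \pi _r^{-1} $, this gives an isometric embedding of the $ \sigma [\pi _r ] $-measurable $ L^2 $-subspace of $ \Lm $ into $ L^2(\Theta _r , \mu _r ) $. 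Moreover, since only points $ x_i $ with $ |x_i| < r $ contribute to derivatives of $ \sigma [\pi _r ] $-measurable functions, the square field \eqref{:25} satisfies $ \mathbb{D}[\mathfrak{f}, \mathfrak{g}](\theta ) = \mathbb{D}^{(r)}[\tilde{\mathfrak{f}}, \tilde{\mathfrak{g}}](\pi _r (\theta )) $, where $ \mathbb{D}^{(r)} $ is the analogously defined square field on $ \Theta _r $. Integrating, $ \mathcal{E}(\mathfrak{f}, \mathfrak{g}) = \mathcal{E}^{(r)}(\tilde{\mathfrak{f}}, \tilde{\mathfrak{g}}) $ for the corresponding form on $ L^2(\Theta _r , \mu _r ) $.

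Next I would apply \lref{l:51} to the pushforward $ \mu _r $. Since $ I _r $ is bounded, every configuration on $ I _r $ is finite, so $ \mu _r (\{ \theta \in \Theta _r \, ; \, \theta (I _r ) < \infty \}) = 1 $. The $ n $-density functions of $ \mu _r $ on $ I _r $ are precisely the $ \sigma _r^n $ from \sref{s:3}, which are continuous by \lref{l:61}. Thus \lref{l:51}, applied with $ \mathsf{E} $ replaced by $ I _r $ and $ \mu $ replaced by $ \mu _r $, yields closability of the analogous pre-Dirichlet form on $ L^2(\Theta _r , \mu _r ) $.

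Finally, closability transfers back along the isometric embedding. If $ \{ \mathfrak{f}_k \} \subset \dom _{\infty , r} $ satisfies $ \mathfrak{f}_k \to 0 $ in $ \Lm $ and is $ \mathcal{E} $-Cauchy, then $ \tilde{\mathfrak{f}}_k \to 0 $ in $ L^2(\Theta _r , \mu _r ) $ and is $ \mathcal{E}^{(r)} $-Cauchy, so the closability above forces $ \mathcal{E}^{(r)}(\tilde{\mathfrak{f}}_k , \tilde{\mathfrak{f}}_k ) \to 0 $, whence $ \mathcal{E}(\mathfrak{f}_k , \mathfrak{f}_k ) \to 0 $. The main obstacle I anticipate is bookkeeping rather than substance: one must verify that local smoothness and the $ \di $-condition transfer cleanly to $ \tilde{\mathfrak{f}} $ on $ \Theta _r $, and that the $ n $-density functions of $ \mu _r $ really coincide with the $ \sigma _r^n $ of \lref{l:61}. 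Both follow directly from \eqref{:3.5} and the definition \eqref{:25} of $ \mathbb{D} $.
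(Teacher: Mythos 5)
Your argument is correct and matches the paper's in its essentials: both reduce to the continuity of the finite-volume density functions $\sigma_r^n$ from \lref{l:61} and then decompose by the number of points in $I_r$ to invoke classical finite-dimensional closability. The only difference is packaging --- you factor through the pushforward $\mu_r = \mu\circ\pi_r^{-1}$ and reuse \lref{l:51} (noting that $\mu_r$-a.s.\ configurations on the bounded set $I_r$ are finite), whereas the paper re-runs the decomposition-by-$n$ argument directly, defining $\mathcal{E}_r^n$ on $\Theta_r^n=\{\theta(I_r)=n\}$ and observing that $(\mathcal{E},\dom_{\infty,r})$ is their increasing limit.
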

\begin{proof}
Let $ I_r = \{ x \in \mathsf{E} ; |x| < r \}  $ and 
$ \Theta _r^n = \{ \theta (I_r) = n \}  $.  
Let 
$ \mathcal{E} _r^n (\mathfrak{f},\mathfrak{g}) 
= \int _{\Theta _r^n } \mathbb{D} [\mathfrak{f}, \mathfrak{g} ] d \mu $.  
Then it is enough to show that 
$ (\mathcal{E} _r^n , \dom _{\infty , r}) $ 
are closable on $ \Lm $ for all $ n $.  

Since $ \mathfrak{f}  $ is $ \sigma[\pi _r] $-measurable, we have  
($ \mathbf{x}=  (x_1,\ldots,x_n) $)
\begin{align*}&
\mathcal{E} _r^n (\mathfrak{f},\mathfrak{g}) = \frac{1}{n!}
\int _{I_r^n } 
\sum _{i=1}^n \frac{1}{2} 
\frac{\partial f _r^n (\mathbf{x} )}{\partial x_i } \cdot 
\frac{\partial g _r^n (\mathbf{x} )}{\partial x_i }
 \sigma _r^n (\mathbf{x} )d \mathbf{x} 
,\end{align*}
where $ f _r^n $ and $ g _r^n $ are defined similarly as after \eqref{:3.5}. 
Then since $ \sigma _r^n  $ is continuous, 
we see $ (\mathcal{E} _r^n , \dom _{\infty , r} ) $ is closable.  
\end{proof}

\begin{proof}[Proof of \tref{l:28}]
By \lref{l:62} we see the assumption \thetag{A.1$ ^* $} in \cite{o.dfa} 
is satisfied. \thetag{A.2} in \cite{o.dfa} is also satisfied by the construction of determinantal random point fields. So one can apply results 
in \cite{o.dfa} (Theorem 1, Corollary 1, Lemma 2.1 \thetag{3} in \cite{o.dfa}) to the present situation. 
Although in Theorem 1 in \cite{o.dfa} we treat $ (\mathcal{E}, \dom )  $, 
it is not difficult to see that the same conclusion also holds for 
$ (\Ereg , \Dreg ) $, which completes the proof. 
\end{proof}

\section{Gibbsian case}\label{s:7}
In this section we consider the case $ \mu $ is a canonical Gibbs measure 
with interaction potential $ \Phi $, whose $ n $-density functions for 
bounded sets are bounded, and 1-correlation function is locally integrable. 
If $ \Phi $ is super stable and regular in the sense of Ruelle, then 
probability measures satisfying these exist. 
In addition, it is known in \cite{o.dfa} that, if  $ \Phi $ is upper semi-continuous 
(or more generally $ \Phi $ is a measurable function dominated from above by 
a upper semi-continuous potential satisfying certain integrable conditions 
(see \cite{o.m})), 
then the form $ (\mathcal{E}, \dom ) $ on $ \Lm $ is closable. 
We remark these assumptions are quite mild.  
In \cite{o.dfa} and \cite{o.m} only {\em grand} 
canonical Gibbs measures with {\em pair} interaction potential are treated; it is easy to generalize the results in \cite{o.dfa} and \cite{o.m} 
to the present situation.

\begin{prop}\label{l:71}
Let $ \mu $ be as above. Assume $ d \ge 2 $. Then 
$ \0 (\mathsf{A} ) = 0 $ and no collision \eqref{:27} occurs. 
\end{prop}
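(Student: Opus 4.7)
The plan is to mimic the proof of \tref{l:20} (\lref{l:43}--\lref{l:45}), replacing the one-dimensional logarithmic cut-off $\4$ in \eqref{:46} by the equilibrium potential of a point in $\Rd$ for $d \ge 2$. The crucial input from the determinantal structure, \lref{l:42}'s vanishing bound $\sigma_{\7}^{\6} \le c \min_{i\ne j} |x_i - x_j|$, is unavailable in the Gibbsian setting; the hypothesis only yields that $\sigma_{\7}^{\6}$ is locally bounded. The assumption $d \ge 2$ compensates for this loss, since a single point in $\Rd$ already has zero capacity for the standard Dirichlet form on $\Rd$.

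First I would reproduce verbatim the decomposition of \lref{l:43}: $\mathsf{A}$ is a countable union of the open sets $\Ae$ over $\mathbf{a} \in \mathbf{A}$ and $(\7,\6,\8) \in \mathbb{I}$, so by sub-additivity and monotonicity of capacity it suffices to prove $\inf_{0 < \e < 1/2\8} \0(\Ae) = 0$ for each fixed tuple. Then I would redefine $\4$, $\5$ and $\mathfrak{g}_\e$ exactly as in \sref{s:4}, but using the $d$-dimensional cut-off
\begin{align*}
\4(t) =
\begin{cases}
1 & (|t| \le \e), \\
(|t|^{-(d-2)} - 1)/(\e^{-(d-2)} - 1) & (\e < |t| < 1), \\
0 & (|t| \ge 1),
\end{cases}
\end{align*}
for $d \ge 3$ and $\4(t) = \log|t|/\log\e$ on the middle annulus for $d = 2$. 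The analogues of \eqref{:47}--\eqref{:49b} follow from the same direct computations as in \lref{l:44}.

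The only substantive change is in the energy bound. Combining the pointwise estimate of $|\nabla \mathfrak{g}_\e|^2$ with the local boundedness of $\sigma_{\3}^{\6}$ gives
\begin{align*}
\mathcal{E}(\mathfrak{g}_\e,\mathfrak{g}_\e) \le C \int_{|y| \le 1 + \e} |\nabla \4(y)|^2 \, dy,
\end{align*}
with $C$ independent of $\e$, and a direct computation yields $\int_{\Rd} |\nabla \4|^2 \, dy \asymp 1/|\log\e|$ for $d = 2$ and $\asymp 1/(\e^{-(d-2)} - 1)$ for $d \ge 3$, both vanishing as $\e \to 0$. The $\Lm$-norm estimate from \eqref{:48a}, \eqref{:48c} is unchanged, so $\inf_\e \0(\Ae) = 0$, hence $\0(\mathsf{A}) = 0$. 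The no-collision conclusion \eqref{:27} then follows from the closability of $\ed$ on $\Lm$ (stated in the paragraph preceding the proposition via \cite{o.dfa,o.m}), \lref{l:22}, and the general Dirichlet-form argument recalled after \tref{l:23}. The main obstacle is bookkeeping: one must verify that the mollified $\5$ still lies in $\di$ and satisfies the support and $L^\infty$ bounds of \lref{l:44}; but since the qualitative profile of $\4$ (radial, identically $1$ on a ball, vanishing outside a slightly larger ball, smooth in between) is the same as in the one-dimensional case, this verification transfers word-for-word.
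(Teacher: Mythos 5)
Your proposal is correct and takes essentially the same route as the paper: decompose $\mathsf{A}$ as in \lref{l:43}, then show $\mathbf{I}_\e\to0$ using a cutoff whose gradient energy vanishes because a point has zero Newtonian capacity in $\Rd$ for $d\ge 2$, which compensates for the unavailable vanishing density bound of \lref{l:42}. The only difference is the $d\ge3$ cutoff: the paper replaces $\4$ with the piecewise-linear profile $\4(t)=2$ on $|t|\le\e$, $-(2/\e)|t|+4$ on $\e\le|t|\le2\e$, $0$ on $|t|\ge2\e$, and shrinks $\Ab$ to $\mathsf{A}^{\mathbf{a}}_{4\e}(\7,\6,\8)$; this gives $\mathbf{I}_\e\lesssim\e^{d-2}$ just as your Riesz-potential cutoff does, but additionally makes $\mu(\Ab)\to0$ immediate, whereas with your cutoff (support of radius $1$, fixed) the $\Lm$-norm term no longer follows from $\mu(\Ab)\to0$ and instead needs the observation that $\4(t)\to0$ pointwise off the diagonal plus dominated convergence.
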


\begin{proof}
The proof is quite similar to the one of \tref{l:20}. 
Let $ \mathbf{I}_{\e }  $ be as in \eqref{:4b}.  
It only remains to show $ \lim_{\e \to 0} \mathbf{I}_{\e }  = 0 $. 

We divide the case into two parts: \thetag{1} $ d = 2 $ and \thetag{2} $ 3 \le d $.  Assume \thetag{1}. We can prove $ \lim \mathbf{I}_{\e } =0 $ similarly as 
before. 
In the case of \thetag{2} the proof is more simple. 
Indeed, we change definitions of 
$ \Ab  $ 
in \eqref{:45} and $ h_{\e } $ in \eqref{:46} as follows:
$ \Ab = \mathsf{A}_{4 \e  }^{\mathbf{a} } (\7,\6,\8)  $  
\begin{align}\label{:71}&
\4 (t) = 
\begin{cases} 
2 								& ( |t| \le \e )
\\  -(2/\e) |t| + 4  		&( \e \le |t| \le 2\e   ) 
\\0 							& ( 2\e  \le |t| )
.\end{cases}
\end{align}
Then we can easily see $ \lim \mathbf{I}_{\e } =0 $. 
\end{proof}

\begin{rem}\label{r:71} \thetag{1} 
This result was announced and used in \cite[Lemma 1.4]{o.inv2}. 
Since this result was so different from other parts of the paper \cite{o.inv2}, 
we did not give a detail of the proof there. 
\\
\thetag{2} In \cite{r-s} a related result was obtained. 
In their frame work the choice of the domain of Dirichlet forms 
may be not same as ours. Indeed, their domains are 
smaller than or equal to ours 
(we do not know they are same or not). 
So one may deduce \pref{l:71} from their result. 
\end{rem}

\bigskip

\noindent 
Address: \\
Graduate School of Mathematics\\
Nagoya University \\
Chikusa-ku, Nagoya, 464-8602\\

\bigskip

\noindent 
Current Address (2015) \\
 Hirofumi Osada\\
 Tel  0081-92-802-4489 (voice)\\
 Graduate School of Mathematics,\\
 Kyushu University\\
 Fukuoka 819-0395, JAPAN\\
{\em \texttt{osada@math.kyushu-u.ac.jp}}

\noindent 
submit: February 9, 2003, revised: March 31, 2003

\noindent 
{\em Partially supported by Grant-in-Aid for Scientific Research (B) 11440029}

\end{document}